\documentclass[11pt]{amsart}
\usepackage{a4wide}
\usepackage[T1]{fontenc}
\usepackage{amssymb,amsmath,amsthm,latexsym}
\usepackage{mathrsfs}
\usepackage[usenames,dvipsnames]{color}
\usepackage{euscript}
\usepackage{graphicx}
\usepackage{mdwlist}
\usepackage{enumerate}
\usepackage{mathtools,dsfont,wasysym}
\usepackage{bbm}
% for bb 1 in characteristic function (with \mathbbm)
\usepackage{stmaryrd}
\usepackage{centernot}
\setlength {\marginparwidth }{2cm}
\usepackage{todonotes}
\usepackage{hyperref}
%colors of links and so on
\hypersetup{colorlinks=true,  linkcolor=black, citecolor=teal, urlcolor=cyan}

\makeatletter
\@namedef{subjclassname@2020}{\textup{2020} Mathematics Subject Classification}
\makeatother

%-------------------------------------------------------%
% 		                    THEOREMS	           		%
%-------------------------------------------------------%
\newtheorem{theorem}{Theorem}[section]

\newtheorem{corollary}[theorem]{Corollary}
\newtheorem{proposition}[theorem]{Proposition}

\newcounter{maintheorem}

\theoremstyle{remark}
\newtheorem{remark}[theorem]{Remark}
\theoremstyle{definition}
\newtheorem{definition}[theorem]{Definition}

\newtheorem{example}[theorem]{Example}
\numberwithin{equation}{section}
\makeatother

%-------------------------------------------------------%
% 		                 SHORTHANDS     	       		%
%-------------------------------------------------------%
\newcommand{\R}{\mathbb{R}}
\newcommand{\N}{\mathbb{N}}
\newcommand{\Z}{\mathbb{Z}}

\newcommand{\nn}[1]{{\left\vert\kern-0.25ex\left\vert\kern-0.25ex\left\vert #1
\right\vert\kern-0.25ex\right\vert\kern-0.25ex\right\vert}}

\renewcommand{\leq}{\leqslant}
\renewcommand{\geq}{\geqslant}
\newcommand{\dif}{\nabla_{xy}}

\DeclareMathOperator{\Vol}{Vol}

%-------------------------------------------------------%
% 		            ROMAN ENUMERATION   	       		%
%-------------------------------------------------------%
%For (i), (ii), ... in enumerations
\newcounter{smallromans}

{\end{list}}

%-------------------------------------------------------%
% 		        CHANGE SPACES NOTATION HERE	       		%
%-------------------------------------------------------%
\newcommand{\bone}{\mathbbm{1}}

%%%%%%%%%%%%%%%%%%%%%%%%%%%%%%%%%%%%%%%%%%%%%%%%%%%%%%%%%
%%%%%%%%%%%%%%%%%%%%%%%%%%%%%%%%%%%%%%%%%%%%%%%%%%%%%%%%%
%                                                       %
% 						BEGIN DOCUMENT 					%
%                                                       %
%%%%%%%%%%%%%%%%%%%%%%%%%%%%%%%%%%%%%%%%%%%%%%%%%%%%%%%%%
%%%%%%%%%%%%%%%%%%%%%%%%%%%%%%%%%%%%%%%%%%%%%%%%%%%%%%%%%
\begin{document}
\title[Nonexistence for the wave equation on graphs]{Nonexistence results for the semilinear\\ wave equation on graphs}

\author[D.D. Monticelli]{Dario D. Monticelli}
\address[D.D. Monticelli]{Politecnico di Milano, Dipartimento di Matematica, Piazza Leonardo da Vinci 32, 20133 Milano, Italy}
\email{dario.monticelli@polimi.it}

\author[F. Punzo]{Fabio Punzo}
\address[F. Punzo]{Politecnico di Milano, Dipartimento di Matematica, Piazza Leonardo da Vinci 32, 20133 Milano, Italy}
\email{fabio.punzo@polimi.it}

\author[J. Somaglia]{Jacopo Somaglia}
\address[J. Somaglia]{Politecnico di Milano, Dipartimento di Matematica, Piazza Leonardo da Vinci 32, 20133 Milano, Italy}
\email{jacopo.somaglia@polimi.it}

\keywords{Graphs, semilinear hyperbolic equations on graphs, nonexistence of global solutions, distance function, weighted volume, test functions.}
\subjclass[2020]{35A01, 35A02, 35B44, 35K05, 35K58, 35R02.}
\date{\today}

\begin{abstract}
We investigate the semilinear wave equation with a positive potential on weighted graphs. We establish sufficient conditions for the nonexistence of global-in-time solutions. Both nonnegative and sign-changing solutions are considered. In particular, the proof for sign-changing
solutions relies on a novel technique for this type of result.
\end{abstract}
\maketitle

%-------------------------------------------------------%
%                                                       %
% 						INTRODUCTION    				%
%                                                       %
%-------------------------------------------------------%

\section{Introduction}
We study the nonexistence of global solutions to the Cauchy problem
for a semilinear wave equation of the form:
\begin{equation}\label{e: maineq}
    \begin{cases}
    u_{tt} - \Delta u\geq v|u|^\sigma \quad &\mbox{ in } V\times (0,\infty),\\
    u=u_0\quad &\mbox{ in } V\times\{0\},\\
    u_t=u_1\quad &\mbox{ in } V\times\{0\}
        \end{cases}
\end{equation}
posed on a weighted graph $(V, \omega, \mu)$, where $\Delta$ denotes
the graph Laplacian. The function $v : V \to \mathbb{R}$, commonly
referred to as the \emph{potential}, is assumed to be positive.
The exponent satisfies $\sigma > 1$, and the functions
$u_0, u_1 : V \to \mathbb{R}$ represent the prescribed initial data.

In recent years, there has been growing interest in studying partial
differential equations on graphs, especially those involving infinite
or weighted configurations (cf. \cite{Grig1, KLW, Mu1}).

Although much of the focus has traditionally been on elliptic problems,
which have been thoroughly investigated (see, for instance,
\cite{BP1, BMP1, GLY, GLY2, HW, MP, MPS1}), a considerable
body of work has also emerged around parabolic equations, as
demonstrated in papers such as
\cite{BCG, BMP2, DvC, GT, HL, H, HKS, KR, LW, M, MP2, MPS2, PT1, PT2}. The wave equation has likewise attracted attention, as discussed in
\cite{AZ, AN, FT, HaHua, LX}.

\smallskip

In contrast to the parabolic case, the theory surrounding nonexistence results for hyperbolic
equations has
developed more slowly. To the best of our knowledge, the earliest
results about the nonexistence of global solutions for the wave
equation in $\mathbb{R}^n$ are found in \cite{Kato80}, with a
broader framework discussed in \cite{Sha85, MP01}. These results show that the Cauchy problem
\begin{equation} \label{eq:waveR}
    \begin{cases}
    u_{tt} - \Delta u \geq |u|^q & \text{in } \mathbb{R}^n \times (0, \infty) \\
    u = u_0 & \text{in } \mathbb{R}^n\times\{0\} \\
    u_t = u_1 & \text{in } \mathbb{R}^n\times\{0\}
    \end{cases}
\end{equation}
admits no nontrivial solution, provided that
\begin{equation*}
    1 < q \leq \frac{n+1}{n-1}
    \quad \text{and} \quad
    \liminf_{R \to \infty} \int_{B_R(0)} u_1(x)\, dx \geq 0.
\end{equation*}
Also, in \cite{MPS20} (see also \cite{Ru}) it is analyzed the hyperbolic problem
\begin{equation} \label{eq:hyperbolicM}
    \begin{cases}
    u_{tt} - \Delta u \geq v |u|^q & \text{in } M \times (0, \infty), \\
    u = u_0 & \text{in } M\times\{0\}, \\
    u_t = u_1 & \text{in } M\times\{0\},
    \end{cases}
\end{equation}
where $M$ is a complete, noncompact Riemannian manifold of dimension
$n$, equipped with a metric tensor $g$, and $\Delta$ denotes the
Laplace--Beltrami operator on $M$. The exponent satisfies $q > 1$,
and the potential $v$ belongs to $L^1_{\mathrm{loc}}(M \times [0, \infty))$
and is strictly positive almost everywhere. It is proved a nonexistence result for very weak solutions to this problem,
under the assumption of a suitable lower bound on the Ricci curvature
and a weighted volume growth condition on geodesic balls, where the
weight depends explicitly on the potential $v$.

\smallskip

The issue of nonexistence of solutions on graphs has been investigated for elliptic equations e.g. in \cite{GHS, MPS1}, and for parabolic equations e.g. in \cite{GMP, GSXX, LW, MPS2, Wu}. In particular, in \cite{MPS2} it is studied
 the nonexistence of nonnegative, nontrivial global solutions
to a class of semilinear parabolic inequalities of the type:
\begin{equation} \label{eq:parabolic}
    u_t \geq  \Delta u + v u^\sigma
    \quad \text{in } V \times (0, \infty),
\end{equation}
where $\sigma > 1$, and $v$ is a given positive potential. Assuming an upper bound on the Laplacian of the distance function
and a suitable space-time weighted volume growth condition, it is established
that problem \eqref{eq:parabolic} admits no nontrivial
nonnegative global solutions.

\medskip

In this paper, we consider both the case of nonnegative solutions and that of general sign-changing solutions. It is important to distinguish between these two cases, as we shall explain shortly. Indeed, we prove that the only nonnegative global-in-time solution to the Cauchy problem is the identically zero solution, under suitable hypotheses on the initial data and specific weighted space-time volume growth conditions. These conditions depend on the graph, the potential and the exponent \(\sigma\). Moreover, we assume an upper bound on the Laplacian of the distance function from a fixed point on the graph.

In this setting, the proof resembles, though with notable differences, the parabolic case on graphs and the hyperbolic one on Riemannian manifolds. It relies on a priori estimates obtained by choosing suitable test functions. It is worth emphasizing that these test functions are compactly supported.

In contrast, in the case of general solutions that may change sign, this approach no longer applies. While the proof still relies on a priori estimates and the selection of test functions, compactly supported test functions are inadequate (see Proposition \ref{nosuppcomp} and the comments before that). Instead, we employ test functions supported on the entire graph, but with sufficiently fast decay at infinity. However, this requires us to strengthen our assumptions. In particular, we must assume a stronger condition on the Laplacian—specifically, an estimate on the modulus of the Laplacian of the distance function from a fixed point. Additionally, the weighted volume growth condition is replaced by a more stringent one, and we further assume that $u$ belongs to a suitable weighted $\ell^1$ space.

This line of reasoning is novel and represents a departure from existing approaches used for elliptic and parabolic equations on graphs, as well as from the techniques employed for hyperbolic equations on \(\mathbb{R}^n\) and on manifolds, as discussed in the aforementioned literature.

\medskip

The paper is organized as follows. In Section \ref{prel}, we recall some basic properties of graphs and introduce the assumptions used throughout the paper. The main results for infinite graphs are presented in Section \ref{mr}. Nonexistence results for nonnegative solutions are established in Section \ref{proofnonneg}, while those concerning sign-changing solutions are proved in Section \ref{proofsignc}. Several examples illustrating the applicability of our results are provided in Section \ref{ex}.
Finally, as a complement, we discuss some related results in the setting of finite graphs.

\textbf{Acknowledgements.} The authors are members of GNAMPA-INdAM and are partially supported by
GNAMPA projects 2024. Moreover, Dario D. Monticelli and F. Punzo acknowledge that this work is part of the PRIN ”Geometric-analytic methods for PDEs and applications", ref. 2022SLTHCE, financially supported by the EU, in the framework of the "Next Generation EU initiative".

\section{Preliminaries}\label{prel}

In what follows we recall the basic definitions and notations on graphs.
Let $V$ be a countable set and $\mu:V\to (0,\infty)$ be a given function called \textit{node measure}.
Furthermore, let
\begin{equation*}
\omega:V\times V\to [0,\infty)
\end{equation*}
be a symmetric function, called \textit{edge weight}, with zero diagonal and finite sum, i.e.
\begin{equation}\label{omega}
\begin{array}{ll}
\text{(i)}\,\, \displaystyle\omega_{xy}=\omega_{yx}&\text{for all}\,\,\, (x,y)\in V\times V;\\
\text{(ii)}\,\, \displaystyle\omega_{xx}=0 \quad\quad\quad\,\, &\text{for all}\,\,\, x\in V;\\
\text{(iii)}\,\, \displaystyle \sum_{y\in V} \omega_{xy}<\infty \quad &\text{for all}\,\,\, x\in V\,.
\end{array}
\end{equation}
Thus, we define  \textit{weighted graph} the triplet $(V,\omega,\mu)$. Observe that assumption (ii) corresponds to ask that the graph has no loops.
\smallskip

\noindent Let $x,y\in V$. We say that $x$ is {\it connected} to $y$ and we write $x\sim y$, whenever $\omega_{xy}>0$.
 If $x$ and $y$ are connected, we denote by $(x,y)$ the {\it edge} of the graph between the vertices $x$ and $y$. By $E$ we denote the set of all edges of the graph.
A collection of vertices $ \{x_k\}_{k=0}^n\subset V$ is a {\it path} consisting of $n+1$ nodes if $x_k\sim x_{k+1}$ for all $k=0, \ldots, n-1.$

A weighted graph $(V,\omega,\mu)$  is said to be
\begin{itemize}
\item[(i)] {\em locally finite} if each vertex $x\in V$ has only finitely many $y\in V$ such that $x\sim y$;
\item[(ii)] {\em connected} if, for any two distinct vertices $x,y\in V$, there exists a path joining $x$ to $y$;
\item[(iii)] {\em undirected} if its edges do not have an orientation.
\end{itemize}
For any element $x\in V$ we will call {\em degree of $x$} the cardinality of the set $$\{y\in V\colon y\sim x\}.$$

\noindent A {\it pseudo metric} on $V$ is a symmetric map, with zero diagonal, $d:V\times V\to [0, \infty)$, which also satisfies the triangle inequality
\begin{equation*}
  d(x,y)\leq d(x,z)+d(z,y)\quad \text{for all}\,\,\, x,y,z\in V.
\end{equation*}
In general, $d$ is not a metric, since there may be distinct points $x, y\in V$ such that $d(x,y)=0\,.$
Finally, we define the \textit{jump size} $j>0$ of a general pseudo metric $d$ as
\begin{equation}\label{e14f}
j:=\sup\{d(x,y) \,:\, x,y\in V, \omega(x,y)>0\}.
\end{equation}

Let $\Omega$ be a subset of $V$,  we denote by $\bone_\Omega$ the characteristic function of $\Omega$. The {\it volume} of $\Omega\subset V$ is defined by
\[\operatorname{Vol}(\Omega):=\sum_{x\in \Omega}\mu(x)\,.\]

Let $\mathfrak F$ denote the set of all functions $f: V\to \mathbb R$\,. For any $f\in \mathfrak F$ and for all $x,y\in V$, let us give the following
\begin{definition}
Let $(V, \omega,\mu)$ be a weighted graph. For any $f\in \mathfrak F$,
%\begin{itemize}
%\item the {\em difference operator} is
%\begin{equation}\label{e2f}
%\nabla_{xy} f:= f(y)-f(x)\,;
%\end{equation}
%\item
the {\em (weighted) Laplace operator} on $(V, \omega, \mu)$ is
\begin{equation*}
\Delta f(x):=\frac{1}{\mu(x)}\sum_{y\in V}\omega_{xy}[f(y)-f(x)]=\frac 1{\mu(x)}\sum_{y\sim x}\omega_{xy}\nabla_{xy}f\quad \text{ for all }\, x\in V\,,
\end{equation*}
%\end{itemize}
\end{definition}
where
\[\nabla_{xy}f:= f(y)-f(x)\,.\]

It is straightforward to show, for any $f,g\in \mathfrak F$, the validity of
%\begin{itemize}
%\item  the {\it product rule}
%\begin{equation*}
%\nabla_{xy}(fg)=f(x) (\nabla_{xy} g) + (\nabla_{xy} f)g(y) \quad \text{ for all } x,y\in V\,;
%\end{equation*}
%\item
the {\it integration by parts formula}
\begin{equation}\label{e4f}
\sum_{x\in V}[\Delta f(x)] g(x) \mu(x)=-\frac 1 2\sum_{x,y\in V}\omega_{xy}(\dif f)(\dif g)= \sum_{x\in V}f(x) [\Delta g(x)] \mu(x)\,,
\end{equation}
provided that at least one of the functions $f, g\in \mathfrak F$ has {\it finite} support.
%\end{itemize}

We recall the definition of solution in this context.
\begin{definition}
    We say that $u\colon V\times [0,\infty)\to \R$ is a \textit{very weak solution} of \eqref{e: maineq} if $u(x,\cdot)\in L^1_{loc}([0,\infty))\cap L_{loc}^{\sigma}([0,\infty),v(x,t)dt),$ for every $x\in V$ and
    \begin{equation}\label{e: veryweaksol}
    \begin{split}
    \int_0^{\infty}\sum_{x\in V} u(x,t)\varphi_{tt}(x,t)\mu(x) dt&- \int_0^{\infty}\sum_{x\in V} \Delta u(x,t) \varphi (x,t)\mu(x) dt + \sum_{x\in V} u_0(x) \varphi_t(x,0)\mu(x)\\ &- \sum_{x\in V}u_1(x) \varphi(x,0)\mu(x)\geq \int_{0}^{\infty}\sum_{x\in V} v(x,t)|u(x,t)|^\sigma\varphi(x,t)\mu(x)dt,
    \end{split}
    \end{equation}
       for every $\varphi:V\times[0,\infty)\rightarrow\R$ such that $\varphi\geq0$, $\operatorname{supp}\varphi\subset[0,T]\times A$ for some $T>0$ and some finite set $A\subset V$ and $\varphi(x,\cdot)\in C^{2}([0,\infty))$ for every $x\in V$.
\end{definition}

In this paper, we will need the following assumptions:
\begin{equation}\label{e7f}
	\begin{aligned}
		\text{(i)}\,\,\, & (V, \omega, \mu) \text{ is a connected, locally finite, undirected, weighted graph};\\
		\text{(ii)}\,\, \, &  \text{ there exists a constant } C>0 \text{ such that for every } x\in V, \\
		&\qquad \sum_{y\sim x}\omega_{xy}\leq C \mu(x);\\
		\text{(iii)} \,\,\,& \text{there exists a \textit{pseudo metric}}\,\, d \,\,\,\text{such that its jump size $j$ is finite}; \\
		\text{(iv)}\,\,\,& \text{the ball}\,\,\, B_r(x) \,\,\,\text{with respect to}\,\,\, d\,\,\, \text{is a finite set, for any}\,\,\, x\in V,\,\,\, r>0;\\
		\text{(v)}\,\, & \text{ for some } x_0\in V,\, R_0>1,\, \alpha\in [0, 1],\, C>0 \text{ there holds } \\
		& \qquad\Delta d(x, x_0)\leq \frac C{d^\alpha(x, x_0)} \text{ for any }\, x\in V\setminus B_{R_0}(x_0).
        \end{aligned}
\end{equation}
For some of our results we will need to strengthen condition \eqref{e7f} requiring
\begin{equation}\label{e7fn}
\begin{aligned}
&\text{(i)-(iv)} \text{ of condition } \eqref{e7f};\\
&\text{(v$'$)}\,\,  \text{ for some } x_0\in V,\, R_0>1,\, \alpha\in [0, 1],\, C>0 \text{ there holds } \\
		& \qquad|\Delta d(x, x_0)|\leq \frac C{d^\alpha(x, x_0)} \text{ for any }\, x\in V\setminus B_{R_0}(x_0).
        \end{aligned}
\end{equation}
\begin{remark}\label{r: carattmodulolaplaciano}
    Let $\alpha\in [0,1]$, and $R_0\geq 2j$. Then
    \begin{equation}\label{e: stima laplaciano distanza modulo}
        |\Delta d(x,x_0)|\leq \frac{C}{d^\alpha(x,x_0)} \,\, \mbox{ for any } x\in V\setminus B_{R_0}(x_0),
    \end{equation}
    if and only if
    \begin{equation}\label{e: stima laplaciano distanza modulo con esponente}
        |\Delta d^{1+\alpha}(x,x_0)|\leq C \,\,\, \mbox{ for any } x\in V.
    \end{equation}
    Indeed, using Taylor expansions of the function $p\mapsto p^{1+\alpha}$ we have, for every $p\geq 0, r\geq 0$,
    \begin{equation*}
        p^{1+\alpha}-r^{1+\alpha}=(1+\alpha)r^{\alpha}(p-r) + \frac{\alpha(1+\alpha)}{2}\eta^{\alpha-1} (p-r)^2,
    \end{equation*}
    for some $\eta$ in between $p,r$. Let $x\in V\setminus B_{R_0}(x_0)$. By taking $p=d(y,x_0)$, $r=d(x,x_0)$, we get
    \begin{equation}\label{e: espansione di Taylor}
    \begin{split}
        \Delta d^{1+\alpha}(x,x_0)&=\frac{1}{\mu(x)}\sum_{y\sim x}\omega_{xy}(d^{1+\alpha}(y,x_0)-d^{1+\alpha}(x,x_0))\\
        &=\frac{1}{\mu(x)}\sum_{y\sim x}(1+\alpha)\omega_{xy}d^{\alpha}(x,x_0)(d(y,x_0)-d(x,x_0))\\
        &\,\,\,\,\,+\frac{1}{\mu(x)}\sum_{y\sim x}\frac{\alpha(1+\alpha)}{2}\omega_{xy}\eta^{\alpha-1}(d(y,x_0)-d(x,x_0))^2,
   \end{split}
    \end{equation}
    for some $\eta$ in between $d(y,x_0)$ and $d(x, x_0)$. Suppose that \eqref{e: stima laplaciano distanza modulo} holds, then from \eqref{e: espansione di Taylor}, since $\alpha-1\leq 0$ and $\eta\geq \min\{d(y,x_0), d(x, x_0)\}\geq d(x,x_0)-j$, we get
\begin{equation*}
    \begin{split}
        |\Delta d^{1+\alpha}(x)|\leq (1+\alpha) d^\alpha(x,x_0)|\Delta d(x,x_0)| + \frac{\alpha(\alpha+1)}{2\mu(x)}\sum_{y\sim x}\omega_{xy} (d(x,x_0)-j)^{\alpha-1} j^2\leq C,
    \end{split}
\end{equation*}
for every $x\in V\setminus B_{R_0}(x_0)$. For every $x\in B_{R_0}(x_0)$, by finiteness of $B_{R_0}(x_0)$, for a possibly larger constant $C>0$ we have that $|\Delta d^{1+\alpha}(x,x_0)|\leq C$ . Then $$|\Delta d^{1+\alpha}(x,x_0)|\leq C \quad \text{ for every } x\in V.$$\\
Now assume \eqref{e: stima laplaciano distanza modulo con esponente}. Then, for each $x\in V$ such that $d(x,x_0)>R_0$ we have by \eqref{e: espansione di Taylor}
\begin{equation*}
    |d^\alpha(x,x_0)\Delta  d(x,x_0)|\leq \frac{|\Delta d^{1+\alpha}(x,x_0)|}{(1+\alpha)} + \frac{\alpha}{2\mu(x)}\sum_{y\sim x}\omega_{xy}(d(x,x_0) - j)^{\alpha-1}j^2\leq C,
\end{equation*}
which implies $|\Delta d(x,x_0)|\leq \frac{C}{d^\alpha(x,x_0)}$ for every $x\in V\setminus B_{R_0}(x_0)$.

\smallskip
Similarly, (see \cite[Remark 2.5]{MPS2}) one can prove that if $\alpha\in[0,1]$ then
\begin{equation*}
        \Delta d(x,x_0)\leq \frac{C}{d^\alpha(x,x_0)} \,\, \mbox{ for every } x\in V\setminus B_{R_0}(x_0),
    \end{equation*}
    if and only if
    \begin{equation*}
        \Delta d^{1+\alpha}(x,x_0)\leq C \,\,\, \mbox{ for every } x\in V.
    \end{equation*}
\end{remark}

\begin{remark}\label{rem3.999}
   Let $(V,\omega,\mu)$ be a  weighted graph satisfying \eqref{e7f} (i)-(iv). Then it holds that
    \begin{equation*}
        |\Delta d(x,x_0)|=\left|\frac{1}{\mu(x)}\sum_{y\sim x}\omega_{xy}(d(y,x_0)-d(x,x_0))\right|\leq  \frac{1}{\mu(x)} \sum_{y\sim x}\omega_{xy}|d(x,x_0)-d(y,x_0)|\leq jC
    \end{equation*}
   In particular, $(V,\omega,\mu)$  satisfies \eqref{e7f}-(v) and \eqref{e7fn}-(v') for $\alpha=0$.
\end{remark}

\begin{remark}
Let $d^*$ be the {\em combinatorial graph distance} on $V$, that is the distance which, for any two
vertices $x,y\in V$, counts the least number of edges in a path connecting $x$ and $y$, and let
\[\mu^*(x):=\sum_{y\in V}\omega(x,y), \quad x\in V\,.\]
If $(V, \omega, \mu^*)$ is a connected, locally finite, undirected weighted graph, then $d^*$ has jump size $j^*=1$, and conditions \eqref{e7f} and \eqref{e7fn} are satisfied by $d^*$ with $\alpha=0$, by Remark \ref{rem3.999}.
\end{remark}

\section{Main results}\label{mr}
\subsection{Nonexistence for nonnegative global solutions}
We show that, under suitable assumptions, the only global nonnegative solution of \eqref{e: maineq} is the zero solution.

\begin{theorem}\label{teo1}
	Let assumption \eqref{e7f} be satisfied. Assume the graph $(V,\omega,\mu)$ is infinite.  Let $v\colon V\times [0,\infty)\to \R$ be a positive function, $\sigma>1$ and $\theta_1\geq 2$, $\theta_2\geq 2$ be such that $\frac{\theta_1}{\theta_2}\geq \frac{1 + \alpha}{2}$, with $\alpha\in[0,1]$ as in \eqref{e7f}. Suppose that \begin{enumerate}
    \item for every $R\geq R_0>1$
	\begin{equation}\label{e: stimavolumi}
		\int_0^{\infty}\sum_{x\in V} \bone_{E_R}(x,t) v^{-\frac{1}{\sigma-1}}(x,t)\mu(x)dt\leq CR^{\frac{(1+\alpha)\sigma}{\sigma-1}},
	\end{equation}
where
\begin{equation}\label{22}
    E_R\coloneqq\{(x,t)\in V\times [0,\infty)\colon R^{\theta_1}\leq d^{\theta_1}(x,x_0)+t^{\theta_2}\leq 2 R^{\theta_1}\}
\end{equation}
and  $x_0\in V$ as in \eqref{e7f};
\item \begin{equation}\label{e: liminf}
    \liminf_{R\to\infty}\left\{\sum_{x\in B_R(x_0)}u^+_1(x)\mu(x) - \sum_{x\in B_{2R}(x_0)}u_1^-(x)\mu(x)\right\}\geq 0.
\end{equation}
\end{enumerate}
Let $u\colon V\times [0,\infty)\to \R$ be a non-negative very weak solution of \eqref{e: maineq}, then $u\equiv 0$.
\end{theorem}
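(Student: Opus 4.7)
The plan is to implement the classical Mitidieri--Pohozaev rescaled test function method. Fix a cutoff $\eta\in C^2(\R)$ with $\eta\equiv 1$ on $(-\infty,1]$, $\eta\equiv 0$ on $[2,\infty)$, $\eta'\leq 0$, and pick $\ell\geq \frac{2\sigma}{\sigma-1}$. For $R>R_0$ define
\[
\varphi_R(x,t)\;=\;\eta\!\left(\frac{d^{\theta_1}(x,x_0)+t^{\theta_2}}{R^{\theta_1}}\right)^{\ell}.
\]
By \eqref{e7f}-(iv), $\varphi_R$ is finitely supported in $x$ for each $t$ and compactly supported in $t$, and it is $C^2$ in $t$, hence admissible in \eqref{e: veryweaksol}. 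Because $\theta_2\geq 2$, one has $\partial_t\varphi_R(x,0)=0$, killing the term involving $u_0$; the integration-by-parts formula \eqref{e4f} (applicable since $\varphi_R$ is finitely supported in $x$) then rewrites the weak inequality, using $u\geq 0$, as
\[
\int_0^\infty\sum_{x\in V}v u^\sigma\varphi_R\,\mu\,dt\;+\;\sum_{x\in V}u_1(x)\varphi_R(x,0)\mu(x)\;\leq\;\int_0^\infty\sum_{x\in V}u\,(\varphi_{R,tt}-\Delta\varphi_R)\,\mu\,dt.
\]

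The central technical step is to bound $|\varphi_{R,tt}|$ and $(\Delta\varphi_R)^-$ on the set where they are nonzero, which is contained in a $j$-inflation of the annular region $E_R$ defined in \eqref{22}. A direct chain-rule computation gives $|\varphi_{R,tt}|\leq C\eta^{\ell-2}R^{-2\theta_1/\theta_2}$. For the spatial part, writing $\varphi_R=h_t\circ d$ with $h_t(r):=\eta((r^{\theta_1}+t^{\theta_2})/R^{\theta_1})^\ell$ and Taylor-expanding, one obtains
\[
\Delta\varphi_R(x,t)\;=\;h_t'(d(x,x_0))\,\Delta d(x,x_0)\;+\;\frac{1}{2\mu(x)}\sum_{y\sim x}\omega_{xy}\,h_t''(\xi_y)\,(d(y,x_0)-d(x,x_0))^2.
\]
On the derivative support one has $d(x,x_0)\sim R$, $h_t'(d(x,x_0))\leq 0$ and $|h_t'(d(x,x_0))|\leq C\eta^{\ell-1}R^{-1}$; combining with $\Delta d(x,x_0)\leq C d^{-\alpha}(x,x_0)$ from \eqref{e7f}-(v) yields $h_t'\,\Delta d\geq -C\eta^{\ell-1}R^{-(1+\alpha)}$, while the Hessian remainder is of order $\eta^{\ell-2}R^{-2}$, absorbed since $1+\alpha\leq 2$. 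Thin boundary layers near $\{S=1\}$ (where $\varphi_R$ is locally constant but $\Delta\varphi_R$ may be nonzero) contribute only $O(R^{-2})$ thanks to $\eta'(1)=\eta''(1)=0$. The hypothesis $\theta_1/\theta_2\geq (1+\alpha)/2$ finally gives $R^{-2\theta_1/\theta_2}\leq R^{-(1+\alpha)}$, so both $|\varphi_{R,tt}|$ and $(\Delta\varphi_R)^-$ are bounded by $C\eta^{\ell-2}R^{-(1+\alpha)}$ on their effective supports.

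H\"older's inequality with exponents $\sigma$ and $\sigma/(\sigma-1)$, together with $\ell\geq \frac{2\sigma}{\sigma-1}$ (which makes the leftover power of $\eta$ nonnegative, hence harmless), produces
\[
\int_0^\infty\!\sum_V u\,(|\varphi_{R,tt}|+(\Delta\varphi_R)^-)\mu\,dt\leq C\,R^{-(1+\alpha)}\,J_R^{1/\sigma}\left(\int_0^\infty\sum_V\bone_{E_R}v^{-\frac{1}{\sigma-1}}\mu\,dt\right)^{\!\frac{\sigma-1}{\sigma}},
\]
where $J_R:=\int_0^\infty\sum_V\bone_{E_R}\,v u^\sigma\varphi_R\,\mu\,dt$. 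The weighted volume estimate \eqref{e: stimavolumi} bounds the last factor by $CR^{1+\alpha}$, exactly cancelling the prefactor. Since $\varphi_R(\cdot,0)=1$ on $B_R(x_0)$, $\leq 1$ globally, and $=0$ outside $B_{2R}(x_0)$ for $R$ large, one has $\sum_V u_1\varphi_R(\cdot,0)\mu\geq \sum_{B_R(x_0)}u_1^+\mu-\sum_{B_{2R}(x_0)}u_1^-\mu$, yielding
\[
\int_0^\infty\sum_V v u^\sigma\varphi_R\,\mu\,dt\;+\;\Bigl[\sum_{B_R(x_0)}u_1^+\mu-\sum_{B_{2R}(x_0)}u_1^-\mu\Bigr]\;\leq\;C\,J_R^{1/\sigma}.
\]
A standard dichotomy closes the argument: setting $I_R:=\int_0^\infty\sum_V vu^\sigma\varphi_R\mu\,dt$ and using $J_R\leq I_R$ together with \eqref{e: liminf} (to bound the bracket below by $-\varepsilon$ for $R$ large), one gets $I_R\leq C I_R^{1/\sigma}+\varepsilon$, which, since $\sigma>1$, forces $\sup_R I_R<\infty$. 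Hence $\int_0^\infty\sum_V vu^\sigma\mu\,dt<\infty$, so $J_R\to 0$ by dominated convergence; passing to the limit $R\to\infty$ and invoking \eqref{e: liminf} once more gives $\int_0^\infty\sum_V vu^\sigma\mu\,dt\leq 0$, and since $v>0$ this forces $u\equiv 0$.

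The main obstacle is the spatial Laplacian estimate in the second paragraph: assumption \eqref{e7f}-(v) only provides an \emph{upper} bound on $\Delta d$, not a two-sided bound on $|\Delta d|$, so one must crucially exploit the monotonicity $\eta'\leq 0$ (whence $h_t'\leq 0$) to convert the one-sided information on $\Delta d$ into the required lower bound on $\Delta\varphi_R$, equivalently an upper bound on $(\Delta\varphi_R)^-$. The non-local nature of the graph Laplacian additionally forces careful treatment of the inner boundary layer around $\{S=1\}$, where $\varphi_R$ is locally constant but $\Delta\varphi_R$ need not vanish; this is precisely where the $C^2$ regularity of $\eta$ and the matching of vanishing derivatives at $1$ become essential.
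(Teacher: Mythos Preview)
Your overall strategy coincides with the paper's: rescaled test functions $\phi_R^s$ with $\phi_R=\varphi\circ\psi_R$, $\psi_R=(d^{\theta_1}+t^{\theta_2})/R^{\theta_1}$, the observation $\partial_t\phi_R^s(\cdot,0)=0$, the lower bound $\sum_V u_1\phi_R^s(\cdot,0)\mu\geq\sum_{B_R}u_1^+\mu-\sum_{B_{2R}}u_1^-\mu$, and the final H\"older/dichotomy closure. However, your spatial Laplacian estimate contains a concrete error. You assert that ``on the derivative support one has $d(x,x_0)\sim R$'', but the condition $\psi_R(x,t)\in[1,2]$ only gives $d(x,x_0)\leq 2^{1/\theta_1}R$; the distance can be arbitrarily small provided $t^{\theta_2}$ is close to $R^{\theta_1}$. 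Consequently your deduction ``$|h_t'|\leq C\eta^{\ell-1}R^{-1}$ combined with $\Delta d\leq Cd^{-\alpha}$ yields $h_t'\Delta d\geq -C\eta^{\ell-1}R^{-(1+\alpha)}$'' is invalid as written: from those two bounds you only obtain $h_t'\Delta d\geq -C\eta^{\ell-1}R^{-1}d^{-\alpha}$, which is not $\geq -CR^{-(1+\alpha)}$ without a lower bound $d\geq cR$. The repair is not to absorb the factor $d^{\theta_1-1}$ prematurely into the bound on $|h_t'|$: since $h_t'(r)=\ell\eta^{\ell-1}\eta'\cdot\theta_1 r^{\theta_1-1}/R^{\theta_1}$, one gets directly $h_t'(d)\Delta d\geq -C\eta^{\ell-1}d^{\theta_1-1-\alpha}/R^{\theta_1}\geq -C\eta^{\ell-1}R^{-(1+\alpha)}$ using $d\leq CR$ and $\theta_1-1-\alpha\geq 0$ (with a separate trivial bound for $d\leq R_0$, where only $|\Delta d|\leq C$ is available).

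A second, subtler gap: you Taylor-expand $h_t=\eta(\cdot)^\ell$ directly, so the second-order remainder carries $h_t''(\xi_y)$ with $\xi_y$ between $d(x,x_0)$ and $d(y,x_0)$, and the factor $\eta^{\ell-2}$ in $h_t''$ is evaluated at $\xi_y$ rather than at $x$. For your H\"older step you need $\eta^{\ell-2}$ \emph{at $x$} in order to reconstruct the weight $\varphi_R=\eta^\ell$ inside $J_R$; near the outer edge $\{\psi_R=2\}$ the ratio $\eta^{\ell-2}(\psi_R(\xi_y,t))/\eta^{\ell-2}(\psi_R(x,t))$ is not uniformly controlled. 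The paper sidesteps this entirely: rather than expand $\phi_R^s$, it applies the pointwise convexity inequality $-\Delta(\phi_R^s)\leq -s\,\phi_R^{s-1}\Delta\phi_R$ (valid since $t\mapsto t^s$ is convex on $[0,\infty)$), which places the factor $\phi_R^{s-1}$ exactly at $x$, and then only needs the simpler one-sided estimate $-\Delta\phi_R\leq CR^{-(1+\alpha)}\bone_{F_R}$ taken from \cite{MPS2}. This convexity device is precisely the ingredient your sketch is missing; with it (and the corrected first-order bound above) the remainder of your argument goes through as you outline.
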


\begin{corollary}\label{c: segno}
    	Let assumption \eqref{e7f} be satisfied. Assume the graph $(V,\omega,\mu)$ is infinite.  Let  $\sigma>1$,  $v\colon V\times [0,\infty)\to \R$ be a positive function such that $v(x,t)\geq g(x)$ for every $(x,t)\in V\times [0,\infty)$, for some positive function $g\colon V\to \R$. Suppose that $u$ is a non-negative very weak solution of \eqref{e: maineq}. Suppose that
        \begin{enumerate}
        \item \begin{equation}\label{e50}
            \sum_{x\in B_R(x_0)}\mu(x)g^{-\frac{1}{\sigma-1}}(x)\leq CR^\frac{(1+\alpha)(\sigma+1)}{2(\sigma-1)}
        \end{equation}
            \item \begin{equation}
    \liminf_{R\to\infty}\left\{\sum_{x\in B_R(x_0)}u^+_1(x)\mu(x) - \sum_{x\in B_{2R}(x_0)}u_1^-(x)\mu(x)\right\}\geq 0.
    \end{equation}
        \end{enumerate}
        Then, $u\equiv 0$.
\end{corollary}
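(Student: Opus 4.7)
The plan is to deduce the corollary directly from Theorem~\ref{teo1}. The liminf condition on $u_1$, the positivity of $v$, and the structural hypotheses \eqref{e7f} on the graph are shared between the two statements, so the only substantive task will be to verify the space--time weighted volume bound \eqref{e: stimavolumi} from the purely spatial bound (1). This amounts to choosing $\theta_1,\theta_2$ wisely so that the exponents match.

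The key observation is that the hypothesis $v(x,t)\geq g(x)$ gives the pointwise inequality $v^{-1/(\sigma-1)}(x,t)\leq g^{-1/(\sigma-1)}(x)$, so the integrand in \eqref{e: stimavolumi} no longer depends on $t$. For $(x,t)\in E_R$ one has $d(x,x_0)\leq 2^{1/\theta_1}R$ and $t\leq 2^{1/\theta_2}R^{\theta_1/\theta_2}$, so integrating out $t$ and projecting the support onto $V$ yields
\[
\int_0^{\infty}\sum_{x\in V}\bone_{E_R}(x,t)\,v^{-1/(\sigma-1)}(x,t)\,\mu(x)\,dt \ \leq\ 2^{1/\theta_2} R^{\theta_1/\theta_2}\!\!\sum_{x\in B_{2^{1/\theta_1}R}(x_0)}\!\! g^{-1/(\sigma-1)}(x)\mu(x).
\]
Applying hypothesis (1) at radius $2^{1/\theta_1}R$ bounds the right-hand side by a constant times $R^{\theta_1/\theta_2+(1+\alpha)(\sigma+1)/[2(\sigma-1)]}$.

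The remaining step is to choose $\theta_1/\theta_2$ to produce the target exponent $(1+\alpha)\sigma/(\sigma-1)$ required by \eqref{e: stimavolumi}. A short algebraic check shows this forces $\theta_1/\theta_2=(1+\alpha)/2$, which is exactly the extremal value admitted by Theorem~\ref{teo1}'s constraint $\theta_1/\theta_2\geq (1+\alpha)/2$. A concrete admissible choice is $\theta_1=2(1+\alpha)$, $\theta_2=4$, which respects $\theta_1,\theta_2\geq 2$ for every $\alpha\in[0,1]$. With \eqref{e: stimavolumi} thus verified, Theorem~\ref{teo1} immediately forces $u\equiv 0$.

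I do not foresee any genuine obstacle here: the entire content of the corollary is the observation that hypothesis (1) is the time-integrated form of \eqref{e: stimavolumi} at the borderline ratio $\theta_1/\theta_2=(1+\alpha)/2$, and the spatial exponent $(1+\alpha)(\sigma+1)/[2(\sigma-1)]$ in (1) is precisely calibrated so that, after adding the $(1+\alpha)/2$ coming from the time direction, one lands on $(1+\alpha)\sigma/(\sigma-1)$.
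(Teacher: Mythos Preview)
Your proposal is correct and follows essentially the same approach as the paper: choose $\theta_1=2(1+\alpha)$, $\theta_2=4$ so that $\theta_1/\theta_2=(1+\alpha)/2$, use $v^{-1/(\sigma-1)}\leq g^{-1/(\sigma-1)}$ together with the inclusion $E_R\subset B_{2^{1/\theta_1}R}(x_0)\times[0,2^{1/\theta_2}R^{(1+\alpha)/2}]$, and apply hypothesis (1) to verify \eqref{e: stimavolumi}. The paper's proof is identical in structure and in the specific choice of $\theta_1,\theta_2$.
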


From the previous result it follows immediately the following corollary.

\begin{corollary}\label{c: segno2}
    	Let assumption \eqref{e7f} be satisfied. Assume the graph $(V,\omega,\mu)$ is infinite. Let $\sigma>1$ and assume that for every $R\geq R_0>1$
    \begin{equation}\label{e: stimavolumi-C3}
		\operatorname{Vol}(B_R(x_0))\leq CR^{\frac{(1+\alpha)(\sigma+1)}{2(\sigma-1)}},
	\end{equation}
with $\alpha, x_0$ as in \eqref{e7f}. Let $u\colon V\times [0,\infty)\to \mathbb{R}$ be a non-negative very weak solution of \eqref{e: maineq} with $v\equiv1$. If
\begin{equation}
    \liminf_{R\to\infty}\left\{\sum_{x\in B_R(x_0)}u^+_1(x)\mu(x) - \sum_{x\in B_{2R}(x_0)}u_1^-(x)\mu(x)\right\}\geq 0,
    \end{equation}
    then $u\equiv 0$.
\end{corollary}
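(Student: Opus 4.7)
The plan is to derive Corollary \ref{c: segno2} as an immediate instance of Corollary \ref{c: segno}, so the proof reduces to checking that all hypotheses of the latter are met with an appropriate choice of the lower bound function $g$. In particular, since the potential is the constant $v\equiv 1$, the natural candidate is $g\equiv 1$.

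First I would record that with this choice, the pointwise lower bound required by Corollary \ref{c: segno} holds trivially: $v(x,t)=1\geq 1=g(x)$ for every $(x,t)\in V\times[0,\infty)$. Next I would verify the weighted volume growth condition \eqref{e50}. Since $g^{-1/(\sigma-1)}(x)=1$ for all $x$, we have
\begin{equation*}
    \sum_{x\in B_R(x_0)}\mu(x) g^{-\frac{1}{\sigma-1}}(x)=\sum_{x\in B_R(x_0)}\mu(x)=\operatorname{Vol}(B_R(x_0)),
\end{equation*}
and the hypothesis \eqref{e: stimavolumi-C3} gives exactly the required bound $CR^{(1+\alpha)(\sigma+1)/(2(\sigma-1))}$ for all $R\geq R_0>1$. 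Finally, the liminf condition on $u_1$ is identical in both statements, so it transfers without modification.

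With all hypotheses of Corollary \ref{c: segno} verified, applying it to the non-negative very weak solution $u$ yields $u\equiv 0$, which is the desired conclusion. There is no serious obstacle here: all the work has already been done in proving Corollary \ref{c: segno} (and, in turn, Theorem \ref{teo1}), and Corollary \ref{c: segno2} is merely the specialization $g\equiv 1$, $v\equiv 1$ highlighted because the resulting hypothesis \eqref{e: stimavolumi-C3} is the most transparent, purely geometric version of the volume growth assumption and is the form typically easiest to check in concrete examples.
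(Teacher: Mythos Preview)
Your proposal is correct and matches the paper's own proof essentially verbatim: the paper also derives Corollary~\ref{c: segno2} as an immediate consequence of Corollary~\ref{c: segno} by taking $g\equiv 1$, so that condition~\eqref{e50} reduces to the volume bound~\eqref{e: stimavolumi-C3}.
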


\subsection{Nonexistence for sign-changing global solutions}
In order to study the problem for sign-changing solutions we have to introduce the following space
\[
X_\delta=\Big\{f\colon V\to \R\,\,\Big|\,\, \sum_{x\in V}|f(x)|e^{-\delta d(x,x_0)}\mu(x)<\infty\Big\},
\]
for $\delta>0$. Note that $X_\delta$ can be regarded as a weighted $\ell^1$ space.
We are now in the position of stating the second main result of our paper, in which we make no assumptions on the sign of the solution $u$.

\begin{theorem}\label{teo2}
    Let assumption \eqref{e7fn} be satisfied. Assume the graph $(V,\omega,\mu)$ is infinite. Let $v\colon V\times [0,\infty)\to \R$ be a positive function, $\sigma>1$, $\delta>0$. Suppose that $u$ is a very weak solution of \eqref{e: maineq},
    \begin{equation}\label{equa1111}
        \sum_{x\in V}u_1(x)\mu(x)\geq 0,
    \end{equation}
    $u_0,u_1 \in X_\delta$, and $u\in L^{1}_{loc}([0,\infty),X_\delta)$. Moreover, suppose that for every $R\geq R_0>1$, it holds that
    \begin{equation}\label{equa1122}
        \int_{R^{\frac{1+\alpha}{2}}}^{2R^\frac{1+\alpha}{2}}\sum_{x\in B_R(x_0)}v^{-\frac{1}{\sigma-1}}(x,t) e^{-\delta\frac{d(x,x_0)}{R}}\mu(x)\, dt\leq C R^{\frac{(\alpha+1)\sigma}{\sigma-1}},
    \end{equation}
    and
    \begin{equation}\label{equa1133}
        \int_{0}^{2R^\frac{1+\alpha}{2}}\sum_{x\in V\setminus B_R(x_0)}v^{-\frac{1}{\sigma-1}}(x,t) e^{-\delta\frac{d(x,x_0)}{R}}\mu(x)\, dt\leq C R^{\frac{(\alpha+1)\sigma}{\sigma-1}},
    \end{equation}
    then $u\equiv 0$.
\end{theorem}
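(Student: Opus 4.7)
The plan is to adapt the a priori estimate approach of Theorem~\ref{teo1} so as to admit test functions with full spatial support but fast exponential decay, which the hypothesis $u,u_0,u_1\in X_\delta$ makes possible. Since the very weak formulation \eqref{e: veryweaksol} as stated requires compactly supported test functions, a preliminary step is to extend it to any nonnegative $\varphi$ for which $\sup_{t\in[0,T]}(|\varphi|+|\varphi_t|+|\varphi_{tt}|)(x,t)\leq C(T)e^{-\delta d(x,x_0)}$. Under the $X_\delta$-integrability of $u,u_0,u_1$ and \eqref{e7f}(ii), all sums against such $\varphi$ converge absolutely; one then multiplies by a spatial cut-off $\bone_{B_n(x_0)}$, applies \eqref{e: veryweaksol} and the integration by parts formula \eqref{e4f}, and sends $n\to\infty$ using dominated convergence, where the boundary contribution from the Laplacian on $B_n(x_0)$ vanishes by the exponential decay of $\varphi$ and the uniform bound $\sum_{y\sim x}\omega_{xy}\leq C\mu(x)$.

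The test function will then be taken of the form
\begin{equation*}
\varphi_R(x,t)=\Psi\!\left(\frac{t}{R^{(1+\alpha)/2}}\right)^{\!\!k}e^{-\delta d(x,x_0)/R},
\end{equation*}
for $\Psi\in C^2([0,\infty))$ with $0\leq\Psi\leq 1$, $\Psi(0)=1$, $\Psi'(0)=0$, $\Psi\equiv 0$ on $[2,\infty)$, and $k>2\sigma/(\sigma-1)$ chosen so that $|\psi_R''|^{\sigma/(\sigma-1)}\psi_R^{-1/(\sigma-1)}$ is integrable in $t$. This gives $\varphi_R\geq 0$, $\varphi_R(x,0)=e^{-\delta d(x,x_0)/R}$, and $\partial_t\varphi_R(x,0)=0$; the spatial scale $R$ matches \eqref{equa1122}--\eqref{equa1133}, and $T_R=R^{(1+\alpha)/2}$ balances $\varphi_{R,tt}\sim R^{-(1+\alpha)}\varphi_R$ against the Laplacian contribution.

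The core calculation is the estimate of $\Delta\chi_R$ with $\chi_R(x)=e^{-\delta d(x,x_0)/R}$. A second-order Taylor expansion along each edge yields
\begin{equation*}
\Delta\chi_R(x)=-\frac{\delta}{R}\chi_R(x)\,\Delta d(x,x_0)+\frac{1}{2\mu(x)}\sum_{y\sim x}\omega_{xy}\!\left(\frac{\delta}{R}\right)^{\!\!2}e^{-\delta\xi_{x,y}/R}\bigl(d(y,x_0)-d(x,x_0)\bigr)^{2},
\end{equation*}
for some $\xi_{x,y}$ between $d(x,x_0)$ and $d(y,x_0)$. Combining \eqref{e7fn}-(v$'$), $|d(y,x_0)-d(x,x_0)|\leq j$, and Remark~\ref{rem3.999} near $x_0$, one obtains $|\Delta\chi_R(x)|\leq C\bigl(R^{-1}(1+d(x,x_0))^{-\alpha}+R^{-2}\bigr)e^{\delta j/R}\chi_R(x)$. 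Inserting $\varphi_R$ into the extended version of \eqref{e: veryweaksol} and applying Young's inequality separately to $u\varphi_{R,tt}$ and $u\Delta\varphi_R$ produces
\begin{equation*}
\int_0^{\infty}\sum_{x\in V}v|u|^{\sigma}\varphi_R\mu\,dt\leq-\!\sum_{x\in V}u_1(x)\varphi_R(x,0)\mu(x)+C\bigl(I_1(R)+I_2(R)\bigr),
\end{equation*}
where $I_1(R)$ and $I_2(R)$ are weighted $L^{\sigma/(\sigma-1)}$ integrals of the Young-remainders of $\varphi_{R,tt}$ and $\Delta\varphi_R$ respectively. Conditions \eqref{equa1122} and \eqref{equa1133} are precisely tailored so that $I_1(R)+I_2(R)$ stays bounded (and in the limiting argument can be shown to vanish) as $R\to\infty$, while the initial-data term, by \eqref{equa1111} and dominated convergence using $u_1\in X_\delta$, satisfies $\limsup_{R\to\infty}\bigl(-\sum u_1(x)\varphi_R(x,0)\mu(x)\bigr)\leq 0$. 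Passing to the limit and applying monotone convergence on the left then forces $v|u|^\sigma\equiv 0$, hence $u\equiv 0$.

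The main obstacle is the delicate interplay between the three scales $R$, $T_R=R^{(1+\alpha)/2}$, and $\delta$: the spatial decay rate $\delta/R$ must be matched against the Laplacian bound $|\Delta d|\leq Cd^{-\alpha}$ so that the Young-remainders cancel cleanly and reduce to \eqref{equa1122}--\eqref{equa1133}; the separation into an interior region $B_R(x_0)\times[T_R,2T_R]$ (where $\varphi_{R,tt}$ is active) and a spatial tail $V\setminus B_R(x_0)$ (controlled only by exponential decay) explains why two distinct volume hypotheses appear. Crucially, one must bound $|\Delta\chi_R|$, not merely $\Delta\chi_R$ from one side as in Theorem~\ref{teo1}, because the absolute value $|u|$ coming from the sign-changing nature of the solution prevents absorption of a signed error, and this is exactly why \eqref{e7fn} is required in place of \eqref{e7f}. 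Finally, justifying the extended version of \eqref{e: veryweaksol} and the passage to the limit in $\sum u_1(x)e^{-\delta d(x,x_0)/R}\mu(x)$ rests entirely on $X_\delta$-integrability together with \eqref{equa1111}, and is where the novelty of the method compared to the compactly supported test function approach lies.
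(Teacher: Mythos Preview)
Your overall strategy---extend \eqref{e: veryweaksol} to exponentially decaying test functions via approximation, use a two-sided bound on $\Delta\chi_R$ requiring \eqref{e7fn}-(v$'$), and exploit $u\in X_\delta$ for the limit in the initial-data term---matches the paper. However, your concrete spatial profile $\chi_R(x)=e^{-\delta d(x,x_0)/R}$ creates a gap that the stated hypotheses \eqref{equa1122}--\eqref{equa1133} do not close.

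The paper does \emph{not} take the pure exponential. It sets $\phi_R(x,t)=\eta^s\!\bigl(t/R^{(1+\alpha)/2}\bigr)\,\psi\!\bigl((d(x,x_0)-j)/R\bigr)$ with $\psi\equiv 1$ on $[-j,1]$ and $\psi(r)=e^{-\delta r}$ only for $r\geq 2$. The plateau forces $\Delta\phi_R(\cdot,t)=0$ on $B_R(x_0)$, so the Laplacian contribution is supported on $V\setminus B_R(x_0)$ and is bounded there by $CR^{-(1+\alpha)}e^{-\delta d/R}$. After Young (and later H\"older) this remainder lands exactly on \eqref{equa1133}, while $(\phi_R)_{tt}$ lands on $[T_R,2T_R]\times V$ and is handled by \eqref{equa1122}+\eqref{equa1133}. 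With your $\chi_R$ the Laplacian is nonzero everywhere; on $B_R(x_0)$ your own estimate gives only $|\Delta\chi_R|\lesssim R^{-1}(1+d)^{-\alpha}\chi_R$, which is \emph{not} $\lesssim R^{-(1+\alpha)}\chi_R$ unless $d\sim R$. The resulting Young/H\"older remainder is an integral over $B_R(x_0)\times[0,2T_R]$ with weight $R^{-\sigma/(\sigma-1)}(1+d)^{-\alpha\sigma/(\sigma-1)}$; neither \eqref{equa1122} (time range only $[T_R,2T_R]$) nor \eqref{equa1133} (space only $V\setminus B_R$) controls it. So your claim that ``$I_1(R)+I_2(R)$ stays bounded'' is unjustified for the Laplacian piece inside the ball.

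A second, smaller gap: even once boundedness is secured, monotone convergence alone does not give $u\equiv 0$, since the right-hand side remainders are merely $O(1)$. The paper runs a second pass with H\"older (not Young); the key is that the H\"older factor coming from $|\Delta\phi_R|$ involves $\bigl(\int\!\sum_{V\setminus B_R}v|u|^\sigma\bigr)^{1/\sigma}$, which tends to $0$ precisely because $\Delta\phi_R$ vanishes on $B_R(x_0)$, and similarly the factor from $(\phi_R)_{tt}$ involves $\bigl(\int_{T_R}^{2T_R}\!\sum_V v|u|^\sigma\bigr)^{1/\sigma}\to0$. Without the plateau you lose this localization and cannot conclude vanishing. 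The fix is simple: replace $e^{-\delta d/R}$ by a function that is identically $1$ on $B_R(x_0)$ and equals $e^{-\delta d/R}$ outside $B_{2R}(x_0)$.
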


\begin{corollary}\label{c: nosegno}
      	Let assumption \eqref{e7fn} be satisfied. Assume the graph $(V,\omega,\mu)$ is infinite.  Let  $\sigma>1$, $\delta>0$,  $v\colon V\times [0,\infty)\to \R$ be a positive function such that $v(x,t)\geq g(x)$ for every $(x,t)\in V\times [0,\infty)$, for some positive function $g\colon V\to \R$. Suppose that $u$ is a very weak solution of \eqref{e: maineq},  \begin{equation*}
        \sum_{x\in V}u_1(x)\mu(x)\geq 0,
    \end{equation*}
    $u_0,u_1 \in X_\delta$, and $u\in L^{1}_{loc}([0,\infty),X_\delta)$. Moreover, suppose that for every $R\geq R_0>1$ it holds
    \begin{equation*}
        \sum_{x\in V} g^{-\frac{1}{\sigma-1}}(x)e^{-\delta\frac{d(x,x_0)}{R}}\mu(x)\leq C R^{\frac{(\alpha+1)(\sigma+1)}{2(\sigma-1)}}.
    \end{equation*}
        Then, $u\equiv 0$.
\end{corollary}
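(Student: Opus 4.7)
The plan is to deduce Corollary \ref{c: nosegno} directly from Theorem \ref{teo2} by checking that the stronger hypothesis on the weight $v$ and the summability condition on $g$ together imply the two weighted space-time bounds \eqref{equa1122} and \eqref{equa1133}. The assumptions that $u$ is a very weak solution, that $u_0,u_1\in X_\delta$, that $u\in L^1_{loc}([0,\infty),X_\delta)$, and that $\sum_{x\in V}u_1(x)\mu(x)\geq 0$ are transferred verbatim to Theorem \ref{teo2}, so only \eqref{equa1122} and \eqref{equa1133} need verification.

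The key observation is that $v(x,t)\geq g(x)>0$ gives the pointwise bound
\[
v^{-\frac{1}{\sigma-1}}(x,t)\leq g^{-\frac{1}{\sigma-1}}(x)\quad\text{for every }(x,t)\in V\times[0,\infty),
\]
so the $t$-dependence drops out of the integrands in \eqref{equa1122} and \eqref{equa1133}. First I would treat \eqref{equa1122}: bounding $v^{-1/(\sigma-1)}$ by $g^{-1/(\sigma-1)}$, extending the spatial sum from $B_R(x_0)$ to all of $V$ (which only enlarges the estimate since every summand is nonnegative), integrating the constant in $t$ over an interval of length $R^{(1+\alpha)/2}$, and finally applying the hypothesis
\[
\sum_{x\in V} g^{-\frac{1}{\sigma-1}}(x)e^{-\delta\frac{d(x,x_0)}{R}}\mu(x)\leq C R^{\frac{(\alpha+1)(\sigma+1)}{2(\sigma-1)}}.
\]
Exactly the same argument, now with the spatial sum restricted to $V\setminus B_R(x_0)$, handles \eqref{equa1133}; here the integration in $t$ runs over $[0,2R^{(1+\alpha)/2}]$, again contributing a factor proportional to $R^{(1+\alpha)/2}$.

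The only arithmetic step to double-check is that the resulting exponent has the correct value $\frac{(\alpha+1)\sigma}{\sigma-1}$ demanded by \eqref{equa1122}--\eqref{equa1133}. Adding the time-integration exponent to the spatial one yields
\[
\frac{1+\alpha}{2}+\frac{(1+\alpha)(\sigma+1)}{2(\sigma-1)}=\frac{(1+\alpha)\bigl[(\sigma-1)+(\sigma+1)\bigr]}{2(\sigma-1)}=\frac{(1+\alpha)\sigma}{\sigma-1},
\]
so the exponents match precisely. With both \eqref{equa1122} and \eqref{equa1133} established, Theorem \ref{teo2} applies and forces $u\equiv 0$. There is really no substantive obstacle: the corollary is a straightforward reduction, whose only delicate point is the exponent bookkeeping shown above.
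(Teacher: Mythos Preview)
Your proposal is correct and follows essentially the same approach as the paper: both verify \eqref{equa1122} and \eqref{equa1133} by bounding $v^{-1/(\sigma-1)}$ by $g^{-1/(\sigma-1)}$, enlarging the spatial sum to all of $V$, integrating the resulting constant in $t$, and invoking the hypothesis on $g$ before applying Theorem~\ref{teo2}. Your explicit exponent check $\frac{1+\alpha}{2}+\frac{(1+\alpha)(\sigma+1)}{2(\sigma-1)}=\frac{(1+\alpha)\sigma}{\sigma-1}$ makes precise what the paper leaves implicit.
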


\begin{corollary}\label{c: nosegnoV=1}
      	Let assumption \eqref{e7fn} be satisfied. Assume the graph $(V,\omega,\mu)$ is infinite.  Let  $\sigma>1$, $\delta>0$, suppose that $u$ is a very weak solution of \eqref{e: maineq} with $v\equiv1$,
        \begin{equation*}
        \sum_{x\in V}u_1(x)\mu(x)\geq 0,
    \end{equation*}
    $u_0,u_1 \in X_\delta$, and $u\in L^{1}_{loc}([0,\infty),X_\delta)$. Moreover, suppose that
    \begin{equation}
        \operatorname{Vol}(B_{R+1}(x_0)\setminus B_{R}(x_0))\leq CR^{\frac{(\alpha+1)(\sigma+1)}{2(\sigma-1)}-1},
    \end{equation}
        for every $R\geq R_0>1$. Then, $u\equiv 0$.
\end{corollary}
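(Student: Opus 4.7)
The plan is to deduce Corollary \ref{c: nosegnoV=1} directly from Corollary \ref{c: nosegno} by specializing to $v \equiv 1$ and $g \equiv 1$. Indeed all the structural hypotheses on $(V,\omega,\mu)$, the sign condition $\sum_{x\in V} u_1(x)\mu(x)\geq 0$, the $X_\delta$-integrability of $u_0, u_1$, and the $L^1_{loc}([0,\infty), X_\delta)$-regularity of $u$ transfer verbatim. The only point that needs work is to verify the weighted summability assumption of Corollary \ref{c: nosegno}, which with $g\equiv 1$ reduces to showing
\begin{equation*}
    S(R) \coloneqq \sum_{x\in V} e^{-\delta d(x,x_0)/R}\,\mu(x) \leq C R^{\beta}, \qquad \beta \coloneqq \frac{(\alpha+1)(\sigma+1)}{2(\sigma-1)},
\end{equation*}
for all $R\geq R_0$, using only the annular volume bound $\operatorname{Vol}(B_{k+1}(x_0)\setminus B_k(x_0))\leq C k^{\beta-1}$ (for $k\geq R_0$).

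To prove this estimate, I would decompose $V$ into the shells $A_k \coloneqq B_{k+1}(x_0)\setminus B_k(x_0)$ for $k=0,1,2,\ldots$; since every $x\in V$ has finite distance to $x_0$, these shells cover $V$. On each $A_k$ one has $d(x,x_0)\geq k$, hence $e^{-\delta d(x,x_0)/R}\leq e^{-\delta k/R}$, and therefore
\begin{equation*}
    S(R) \leq \sum_{k=0}^{\lfloor R_0 \rfloor} e^{-\delta k/R}\,\operatorname{Vol}(A_k) + C\sum_{k > R_0} e^{-\delta k/R}\, k^{\beta-1}.
\end{equation*}
The first sum is bounded by $\operatorname{Vol}(B_{\lfloor R_0\rfloor +1}(x_0))$, which is a finite constant by \eqref{e7fn}-(iv). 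The second sum is estimated by comparison with the integral $\int_0^\infty e^{-\delta s/R} s^{\beta-1}\,ds$; the change of variables $s = Ru$ gives $R^{\beta}\int_0^\infty e^{-\delta u}u^{\beta-1}\,du = R^{\beta}\,\Gamma(\beta)/\delta^{\beta}$, which yields $S(R)\leq C R^{\beta}$ for $R$ large enough (absorbing the bounded piece into the constant, since $R\geq R_0 > 1$ and $\beta > 0$). Once $S(R)\leq CR^\beta$ is established, Corollary \ref{c: nosegno} applies and forces $u\equiv 0$.

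The only mild technicality is the integral comparison for the tail sum when $\beta<1$, where $s^{\beta-1}$ is integrable near $0$ but the summand $k^{\beta-1}$ is decreasing; this is handled by splitting off the finitely many small-$k$ terms as above and applying a standard monotonicity comparison on the remaining range. No other obstacle arises, as the rest of the argument is just bookkeeping that checks each hypothesis of Corollary \ref{c: nosegno} is satisfied in this specialized setting.
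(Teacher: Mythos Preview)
Your proposal is correct and follows essentially the same approach as the paper: both reduce to Corollary~\ref{c: nosegno} with $g\equiv 1$, decompose $V$ into annuli $B_{k+1}(x_0)\setminus B_k(x_0)$, separate off finitely many initial shells as a finite constant, and bound the remaining tail $\sum_{k\geq k_0} e^{-\delta k/R} k^{\beta-1}$ by comparison with the integral $\int e^{-\delta r/R} r^{\beta-1}\,dr = C R^{\beta}$. Your handling of the $\beta<1$ case is slightly more explicit than the paper's, but otherwise the arguments coincide.
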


\section{Proof of Theorem \ref{teo1}}\label{proofnonneg}
In this section we prove the hyperbolic counterpart of \cite[Theorem 2.6]{MPS2}. Let us mention that we will skip the overlapping parts and refer to \cite{MPS2} for all the details.

\begin{proof}[Proof of Theorem \ref{teo1}]\label{proofs}
Let $\varphi\in C^2([0,\infty))$ be a cut-off function such that $\varphi\equiv 1$ in $[0,1]$, $\varphi\equiv 0$ in $[2,\infty)$, and $\varphi'\leq 0$. For each $x\in V$, $t\in [0,\infty)$ and $R\geq R_0$ we define
\[
\psi_R(x,t)=\frac{t^{\theta_2}+ d(x,x_0)^{\theta_1}}{R^{\theta_1}},\qquad\phi_R(x,t)=\varphi(\psi_R(x,t)).
\]

We are going to prove the following upper bounds
\begin{enumerate}[(i)]
	\item\label{i: stima laplaciano}  there exists $C\geq 0$ such that for every $x\in V$ and $t\in [0,\infty)$ it holds  \[-\Delta\phi_R(x,t)\leq \frac{C}{R^{1+\alpha}} \bone_{F_R}(x,t),\]
 where $F_R\coloneqq\{(x,t)\in V\times [0,\infty)\colon (R/2)^{\theta_1}\leq d(x,x_0)^{\theta_1}+t^{\theta_2}\leq  (4R)^{\theta_1}\}$;
	\item\label{i: stima derivata in tempo} there exists $C\geq 0$ such that for every $x\in V$ and $t\in [0,\infty)$ it holds  \[\left|\frac{\partial \phi_R}{\partial t}(x,t)\right|\leq \frac{C}{R^{\frac{\theta_1}{\theta_2}}}\bone_{E_R}(x,t).\]
    \item \label{i: stima derivata seconda in tempo} there exists $C\geq 0$ such that for every $x\in V$ and $t\in [0,\infty)$ it holds  \[\frac{\partial^2 \phi_R}{\partial t^2}(x,t)\leq \frac{C}{R^{2\frac{\theta_1}{\theta_2}}}\bone_{E_R}(x,t).\]
\end{enumerate}

We prove only \eqref{i: stima derivata in tempo} and \eqref{i: stima derivata seconda in tempo}, we refer to \cite[Section 3]{MPS2} for the proof of \eqref{i: stima laplaciano}. Let $x\in V$ and $t\geq 0$, we get

\begin{equation*}
	\begin{split}
		\left|\frac{\partial \phi_R}{\partial t}(x,t)\right|=|\varphi'(\psi_R(x,t))|\theta_2 \frac{t^{\theta_2-1}}{R^{\theta_1}} \leq \frac{C}{R^{\theta_1}}R^{\theta_1-\frac{\theta_1}{\theta_2}}\bone_{E_R}(x,t)=\frac{C}{R^\frac{\theta_1}{\theta_2}}\bone_{E_R}(x,t),
	\end{split}
\end{equation*}
and
\begin{equation*}
	\begin{split}
		\frac{\partial^2 \phi_R}{\partial t^2}(x,t)&=\varphi''(\psi_R(x,t))\theta_2^2 \frac{t^{2\theta_2-2}}{R^{2\theta_1}} +\varphi'(\psi_R(x,t))\theta_2(\theta_2 - 1)\frac{t^{\theta_2-2}}{R^{\theta_1}} \\
        &\leq C\left(\frac{R^{\frac{\theta_1}{\theta_2}(2\theta_2 -2)}}{R^{2\theta_1}} + \frac{R^{\frac{\theta_1}{\theta_2}(\theta_2 -2)}}{R^{\theta_1}} \right)\bone_{E_R}(x,t)\\
        &\leq\frac{C}{R^{2\frac{\theta_1}{\theta_2}}}\bone_{E_R}(x,t),
	\end{split}
\end{equation*}
where we have used that $\varphi'(\psi_R(x,t))\equiv0$ and $\varphi''(\psi_R(x,t))\equiv0$ on $E_R^c$, that $\varphi'$ and $\varphi''$ are bounded, and that if $(x,t)\in E_R$, it holds that $t\leq CR^\frac{\theta_1}{\theta_2}$.

The next step is to prove that there exists $C>0$ such that
\[
\int_0^{\infty}\sum_{x	\in V}\mu(x)u^{\sigma}(x,t)v(x,t) \, dt\leq C.
\]
In order to do that we first observe that the support of the test function $\phi_R(x,t)$ is contained in the subset $$Q_R\coloneqq B_{2^\frac{1}{\theta_1}R}(x_0)\times \big[0,2^{\frac{1}{\theta_2}}R^{\frac{\theta_1}{\theta_2}}\big],$$ so that all sums in $x\in V$ are finite, while all the integrals in the time-variable are on compact domains. Moreover $0\leq\phi_R\leq1$ in $V\times[0,\infty)$ and $\phi_R(x,\cdot)\in C^2([0,\infty))$ for all $x\in V$. Since $u$ is a very weak solution of \eqref{e: maineq}, by \eqref{e: veryweaksol} testing the equation with $\phi_R^s$ and  $s>\frac{2\sigma}{\sigma-1}$, we have
\begin{equation}\label{3}
\begin{aligned}
\int_0^{\infty}\sum_{x\in V} \mu(x)v(x,t)u^\sigma(x,t)\phi^s_R(x,t)\, dt\\
&\leq - \int_0^{\infty}\sum_{x\in V}\Delta u(x,t)\phi_R^s(x,t)\mu(x)\, dt\\
&\,\,\,\,\,\,\,\,+\sum_{x\in V}u_0(x)(\phi_R^s)_t(x,0)\mu(x)-\sum_{x\in V}u_1(x)\phi_R^s(x,0)\mu(x)\\
&\,\,\,\,\,\,\,\, +\int_{0}^{\infty}\sum_{x\in V}u(x,t)(\phi_R^s)_{tt}(x,t)\mu(x)dt.
\end{aligned}
\end{equation}
Now we are going to estimate each term one by one. In the same way as in \cite[Theorem 2.6]{MPS2}, from claim (i) and by a convexity argument, we get
\begin{equation*}
\begin{split}
    -\int_0^{\infty}\sum_{x\in V}\mu(x)\Delta u(x,t)\phi^s_R(x,t)\, dt&\leq \frac{C}{R^{1+\alpha}} \int_0^{\infty}\sum_{x\in V}\mu(x)u(x,t)\phi_R^{s-1}(x,t)\bone_{F_R}(x,t) \,dt%\\
    %&\leq  \frac{C}{R^{1+\alpha}} \int_0^{\infty}\sum_{x\in V}\mu(x)u(x,t)\phi_R^{s-2}(x,t)\bone_{F_R}(x,t) \,dt
\end{split}
\end{equation*}
Observing that $(\phi_R^s)_t(x,0)\equiv 0$, we get
\begin{equation*}
    \sum_{x\in V}u_0(x)(\phi_R^s)_t(x,0)\mu(x)\equiv 0.
\end{equation*}
Moreover
\begin{equation*}
    -\sum_{x\in V}u_1(x)\phi_R^s(x,0)\mu(x)= -\sum_{x\in V} u_1(x) \varphi^s\left(\frac{d(x_0,x)^{\theta_1}}{R^{\theta_1}}\right)\mu(x)=-\sum_{x\in B_{2R}(x_0)}u_1(x)\varphi^s\left(\frac{d(x_0,x)^{\theta_1}}{R^{\theta_1}}\right)\mu(x),
\end{equation*}
where in the last equality we have used the fact that $\varphi\equiv 0$ in $[2,\infty)$. We get
\begin{equation*}
\begin{split}
    &-\sum_{x\in B_{2R}(x_0)}u_1(x)\varphi^s\left(\frac{d(x_0,x)^{\theta_1}}{R^{\theta_1}}\right)\mu(x)\\
    &\qquad=-\sum_{x\in B_{2R}(x_0)}u^+_1(x)\varphi^s\left(\frac{d(x_0,x)^{\theta_1}}{R^{\theta_1}}\right)\mu(x) +\sum_{x\in B_{2R}(x_0)}u_1^-(x)\varphi^s\left(\frac{d(x_0,x)^{\theta_1}}{R^{\theta_1}}\right)\mu(x)\\
    &\qquad\leq -\sum_{x\in B_{R}(x_0)}u_1^+(x)\mu(x) + \sum_{x\in B_{2R}(x_0)} u_1^-(x)\mu(x),
\end{split}
\end{equation*}
where in the first inequality we have used that $0\leq \varphi \leq 1$ on $[0,\infty)$ and $\varphi\equiv 1$ on $[0,1]$. It remains to estimate the last term
\begin{equation*}
\begin{split}
\int_0^{\infty} \sum_{x\in V}u(x,t)(\phi_R^s)_{tt}(x,t)\mu(x)\, dt&=\int_0^{\infty} \sum_{x\in V}u(x,t)s(s-1)\phi_R^{s-2}(x,t)\left[\frac{\partial \phi_R}{\partial t}(x,t)\right]^2\mu(x)\, dt\\
&\,\,\,\, \,\,\,+ \int_0^{\infty}\sum_{x\in V}u(x,t)s\phi_R^{s-1}(x,t)\frac{\partial^2\phi_R}{\partial t^2}(x,t)\mu(x)\, dt\\
&\leq \frac{C}{R^{2\frac{\theta_1}{\theta_2}}}\int_0^{\infty}\sum_{x\in V}u(x,t) \phi^{s-2}_R(x,t)\mu(x)\bone_{E_R}(x,t) \, dt\\
&\leq \frac{C}{R^{1+\alpha}}\int_0^{\infty}\sum_{x\in V}u(x,t) \phi^{s-2}_R(x,t)\mu(x)\bone_{E_R}(x,t) \, dt,
\end{split}
\end{equation*}
where we have used \eqref{i: stima derivata in tempo}, \eqref{i: stima derivata seconda in tempo}, and $\frac{\theta_1}{\theta_2}\geq \frac{1+\alpha}{2}$.
Combining everything and observing that $E_R\subset F_R$, from \eqref{3}, we get
\begin{equation*}
\begin{split}
\int_0^{\infty}\sum_{x\in V} \mu(x)v(x,t)u^\sigma(x,t)\phi^s_R(x,t)\, dt&\leq \frac{C}{R^{1+\alpha}}\int_0^{\infty}\sum_{x\in V}u(x,t) \phi^{s-2}_R(x,t)\mu(x)\bone_{F_R}(x,t)\, dt\\
&\,\,\,\, \,\,\,- \left(\sum _{x\in B_{R}(x_0)}u_1^+(x)\mu(x)-\sum _{x\in B_{2R}(x_0)}u_1^-(x)\mu(x)\right).
\end{split}
\end{equation*}
By applying the Young's inequality we get
\begin{equation*}
	\begin{aligned}
& \int_0^{\infty}\sum_{x\in V} \mu(x)v(x,t)u^{\sigma}(x,t)\phi^s_R(x,t)\, dt\\
&\quad \leq \frac{1}{\sigma}	\int_0^{\infty}\sum_{x\in V} \mu(x)v(x,t)u^{\sigma}(x,t)\phi^{s}_R(x,t)\bone_{F_R}(x,t)\,dt\\
 &\qquad+ \frac{C}{R^{\frac{(1+\alpha)\sigma}{\sigma-1}}}\int_0^{\infty}\sum_{x\in V}\mu(x)\phi_R^{s-\frac{2\sigma}{\sigma-1}}(x,t)v^{-\frac{1}{\sigma-1}}(x,t)\bone_{F_R}(x,t)\,dt\\
 &\qquad- \left(\sum _{x\in B_{R}(x_0)}u_1^+(x)\mu(x)-\sum _{x\in B_{2R}(x_0)}u_1^-(x)\mu(x)\right)\\
 &\quad\leq \frac{1}{\sigma}	\int_0^{\infty}\sum_{x\in V} \mu(x)v(x,t)u^{\sigma}(x,t)\phi^{s}_R(x,t)\,dt\\
 &\qquad + \frac{C}{R^{\frac{(1+\alpha)\sigma}{\sigma-1}}}	\int_0^{\infty}\sum_{x\in V}\mu(x)v^{-\frac{1}{\sigma-1}}(x,t)\bone_{F_R}(x,t)\,dt\\
 &\qquad - \left(\sum _{x\in B_{R}(x_0)}u_1^+(x)\mu(x)-\sum _{x\in B_{2R}(x_0)}u_1^-(x)\mu(x)\right).
	\end{aligned}
\end{equation*}
Thus, we have
\begin{equation*}
\begin{split}
	\begin{split}
		\int_0^{\infty}\sum_{x\in V} \mu(x)v(x,t)u^{\sigma}(x,t)\phi^s_R(x,t) dt&\leq \frac{C}{R^{\frac{(1+\alpha)\sigma}{\sigma-1}}}	\int_0^{\infty}\sum_{x\in V}\mu(x)v^{-\frac{1}{\sigma-1}}(x,t)\bone_{F_R}(x,t)\, dt\\&\,\,\,\, \,\,\,- \left(\sum _{x\in B_{R}(x_0)}u_1^+(x)\mu(x)-\sum _{x\in B_{2R}(x_0)}u_1^-(x)\mu(x)\right).
\end{split}
	\end{split}
\end{equation*}
By \eqref{e: stimavolumi} and following the same procedure of \cite[Theorem 2.6]{MPS2}, we get
\begin{equation*}
    \int_0^{\infty}\sum_{x\in V} \mu(x)v(x,t) u^{\sigma}(x,t)\bone_{E_R}(x,t)  \, dt \leq C - \left(\sum _{x\in B_{R}(x_0)}u_1^+(x)\mu(x)-\sum _{x\in B_{2R}(x_0)}u_1^-(x)\mu(x)\right).
\end{equation*}
Taking the $\liminf$ as $R\to \infty$, by \eqref{e: liminf}, we get
\begin{equation}
   \int_0^{\infty}\sum_{x\in V} \mu(x)v(x,t) u^{\sigma}(x,t) \, dt \leq C.
\end{equation}
The conclusion now follows, by an application of H\"older inequality, in a similar way as \cite[Theorem 2.6]{MPS2}.
\end{proof}

\begin{proof}[Proof of Corollary \ref{c: segno}]
    We first fix $\theta_1= 2(1+\alpha)$ and $\theta_2=4$. Thus it holds that $\theta_1,\theta_2\geq 2$ and $\frac{\theta_1}{\theta_2}=\frac{1+\alpha}{2}$. Now we claim that condition \eqref{e: stimavolumi} of Theorem \ref{teo1} holds. Indeed, let $R\geq R_0>1$ and $E_R$ be defined as in the statement of Theorem \ref{teo1}. We have
    \begin{equation*}
        \begin{split}
\int_0^{\infty}\sum_{x\in V}\bone_{E_R}(x,t) v(x,t)^{-\frac{1}{\sigma-1}}\mu(x)\, dt&\leq \int_0^{\infty}\sum_{x\in V}\bone_{E_R}(x,t)g^{-\frac{1}{\sigma-1}}(x)\mu(x)\, dt\\
            & \leq \int_{0}^{2^{\frac{1}{\theta_2}}R^{\frac{1+\alpha}{2}}}\sum_{x\in V}g^{-\frac{1}{\sigma-1}}(x)\bone_{B_{2^{\frac{1}{\theta_1}}R}(x_0)}(x,t)\mu(x)\, dt\\
            &\leq CR^{\frac{(1+\alpha)\sigma}{\sigma-1}}
        \end{split}
    \end{equation*}
    where we have used that $E_R\subset B_{2^{{\frac{1}{\theta_1}}}R}(x_0)\times [0,2^{\frac{1}{\theta_2}}R^{\frac{1+\alpha}{2}}]$. This shows that $u$ must be equal to zero, by Theorem \ref{teo1}.
\end{proof}

\begin{proof}[Proof of Corollary \ref{c: segno2}]
This is a simple consequence of Corollary \ref{c: segno}, with $g\equiv 1$. Thus condition \eqref{e50} becomes \eqref{e: stimavolumi-C3}, and the result follows.
\end{proof}

\section{Proof of Theorem \ref{teo2}}\label{proofsignc}
A key technical tool of the proof of Theorem \ref{teo1} is the construction of a suitable compactly supported nonnegative cut-off function $\phi$ satisfying, for some $C>0$,
\[-\Delta \phi^s \leq - C \phi^{s-1}\Delta\phi \quad \text{ in } V,\]
for $s>1$ large enough. In order to prove uniqueness for sign-changing solutions, we would need to construct a a suitable compactly supported nonnegative cut-off function $\phi$ satisfying
\[|\Delta \phi^s| \leq  C \phi^{s-1}|\Delta\phi| \quad \text{ in } V.\]
The following proposition shows that this is not possible. Thus we will need a different approach in order to deal with sign-changing solutions. This in particular leads us to use non compactly supported cut-off functions. Consequently, we need a summability assumption on the solution $u$ in Theorem \ref{teo2}.

\begin{proposition}\label{nosuppcomp}
Let $(V,\omega,\mu)$ be an infinite, connected, locally finite,
weighted graph. Let $\psi\colon V\to \R$. Suppose that $\psi \geq 0$ and there exists $x_1\in V$ such that $\psi(x_1)=0$. If there are constants $C,\beta,\gamma>0$ and $F\colon V\to [0,\infty)$ such that
    \begin{equation*}
        |\Delta\psi^{\gamma}(x)|\leq C \psi^{\beta}(x)F(x),
    \end{equation*}
    for every $x\in V$, then $\psi\equiv 0$ in $V$.
\end{proposition}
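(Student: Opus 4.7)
The plan is to view the inequality at $x_1$ as the graph analogue of a strong maximum principle: the hypothesis degenerates where $\psi$ vanishes, which forces the weighted sum defining $\Delta\psi^{\gamma}$ to be zero there, and hence all neighbors of $x_1$ must also be zeros of $\psi$. Connectedness then propagates the zero set to the entire graph.

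First, I would evaluate the assumption at the point $x_1$. Since $\psi(x_1)=0$ and $\beta>0$, we have $\psi^{\beta}(x_1)=0$, so
\[
|\Delta\psi^{\gamma}(x_1)| \;\leq\; C\,\psi^{\beta}(x_1)\,F(x_1) \;=\; 0,
\]
which forces $\Delta\psi^{\gamma}(x_1)=0$. Next, I would expand the Laplacian using $\psi^{\gamma}(x_1)=0$:
\[
0 \;=\; \Delta\psi^{\gamma}(x_1) \;=\; \frac{1}{\mu(x_1)}\sum_{y\sim x_1}\omega_{x_1 y}\bigl[\psi^{\gamma}(y)-\psi^{\gamma}(x_1)\bigr] \;=\; \frac{1}{\mu(x_1)}\sum_{y\sim x_1}\omega_{x_1 y}\,\psi^{\gamma}(y).
\]
Since $\psi\geq 0$ and $\gamma>0$ we have $\psi^{\gamma}\geq 0$, so each summand is nonnegative. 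A sum of nonnegative terms equals zero only if every term does; as $y\sim x_1$ means $\omega_{x_1 y}>0$, this yields $\psi^{\gamma}(y)=0$, hence $\psi(y)=0$, for every neighbor $y$ of $x_1$.

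Finally, set $Z:=\{x\in V:\psi(x)=0\}$. The argument above applies verbatim at any $x\in Z$: evaluating the hypothesis at $x$ gives $\Delta\psi^{\gamma}(x)=0$, and the same nonnegative-sum reasoning shows that every neighbor of $x$ belongs to $Z$. Thus $Z$ is closed under the neighbor relation. Since $x_1\in Z$ and $(V,\omega,\mu)$ is connected, any vertex $x\in V$ is joined to $x_1$ by a finite path $x_1=y_0\sim y_1\sim\cdots\sim y_n=x$, and a straightforward induction on $k$ yields $y_k\in Z$ for every $k$. Therefore $\psi\equiv 0$ on $V$.

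There is no real obstacle in this argument; the only point that requires care is the sign bookkeeping, namely that $\psi\geq 0$ together with $\gamma>0$ and $\omega_{x_1 y}\geq 0$ ensures all summands in the expanded Laplacian are nonnegative, which is precisely what enables the "sum of nonnegatives equals zero" step. Local finiteness guarantees that the sum is finite, and connectedness is used only at the very end for propagation.
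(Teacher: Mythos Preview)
Your proof is correct and follows essentially the same approach as the paper's: evaluate the hypothesis at a zero of $\psi$, expand the Laplacian as a sum of nonnegative terms that must all vanish, and then propagate by connectedness. The only difference is that you spell out the induction along paths explicitly, whereas the paper simply says ``by repeating the argument above.''
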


\begin{proof}
    Let $x_1\in V$ be such that $\psi(x_1)=0$. We have
    \begin{equation*}
    \begin{split}
        0=C\psi^{\beta}(x_1)F(x_1)\mu(x_1)&\geq |\sum_{y\sim x_1}\omega_{x_1 y}(\psi^{\gamma}(y) - \psi^{\gamma}(x_1))|\\
        &=\sum_{y\sim x_1}\omega_{x_1y}\psi^{\gamma}(y)\geq 0.
    \end{split}
    \end{equation*}
    Then $\psi(y)=0$ for every $y\sim x_1$. Since $V$ is connected, by repeating the argument above, we get the assertion.
\end{proof}

The following integration by parts formula holds.
\begin{proposition}\label{p: ibp}
    Let $(V,\omega,\mu)$ be an infinite graph satisfying \eqref{e7f} (i)-(iv). Let $u\colon V\to \R$ and $\varphi\colon V\to \R$ be such that, for some $x_0\in V$ and $\delta>0$, the following are satisfied
\begin{enumerate}[(i)]
    \item $\displaystyle\sum_{x\in V}\mu(x)|u(x)|e^{-\delta d(x,x_0)}<\infty$;
    \item $|\varphi(x)|\leq C e^{-\delta d(x,x_0)}$.
\end{enumerate}
Then
\[
\sum_{x\in V}\mu(x) u(x)\Delta \varphi(x)=\sum_{x\in V} \mu(x) \Delta u(x) \varphi(x),
\]
and it is finite.
\end{proposition}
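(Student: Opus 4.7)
The plan is to reduce the identity to a single symmetric double sum $\sum_{x,y\in V}\omega_{xy}u(x)\varphi(y)$ and verify that all intermediate series are absolutely convergent, so that Fubini and the symmetry $\omega_{xy}=\omega_{yx}$ may be freely applied. Finiteness of both sides will then come out as a by-product of these estimates.

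First I would establish a pointwise bound on $\Delta\varphi$. The key observation is that the jump size $j$ is finite, so whenever $y\sim x$ one has $d(y,x_0)\geq d(x,x_0)-j$ and therefore $|\varphi(y)|\leq Ce^{\delta j}e^{-\delta d(x,x_0)}$. Combining this with the hypothesis $|\varphi(x)|\leq Ce^{-\delta d(x,x_0)}$ and with \eqref{e7f}(ii), I obtain
\[
|\Delta\varphi(x)|\leq \frac{1}{\mu(x)}\sum_{y\sim x}\omega_{xy}\bigl(|\varphi(y)|+|\varphi(x)|\bigr)\leq C'e^{-\delta d(x,x_0)}.
\]
Together with assumption (i), this shows at once that $\sum_{x\in V}\mu(x)u(x)\Delta\varphi(x)$ converges absolutely.

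The next step is to expand the Laplacian and split the sum into two double series,
\[
\sum_{x\in V}\mu(x)u(x)\Delta\varphi(x)=\sum_{x,y\in V}\omega_{xy}u(x)\varphi(y)-\sum_{x,y\in V}\omega_{xy}u(x)\varphi(x),
\]
and check that each is absolutely convergent. The second sum is controlled directly by $\sum_x\mu(x)|u(x)||\varphi(x)|\leq C\sum_x\mu(x)|u(x)|e^{-\delta d(x,x_0)}$, which is finite by (i). For the first I invoke the jump-size bound again: for $y\sim x$, $|\varphi(y)|\leq Ce^{\delta j}e^{-\delta d(x,x_0)}$, so $\sum_{x,y}\omega_{xy}|u(x)||\varphi(y)|\leq C''\sum_x\mu(x)|u(x)|e^{-\delta d(x,x_0)}<\infty$ by \eqref{e7f}(ii) and (i).

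With absolute convergence of both double sums in hand, Fubini and the symmetry $\omega_{xy}=\omega_{yx}$ allow me to swap the roles of $x$ and $y$ in the first sum, rewriting it as $\sum_{x,y}\omega_{xy}u(y)\varphi(x)$. Regrouping the two double sums as an inner sum over $y\sim x$ then produces $\sum_{x}\mu(x)\varphi(x)\Delta u(x)$, which is the right-hand side; the same absolute convergence estimate also shows that $\sum_x\mu(x)|\varphi(x)||\Delta u(x)|<\infty$. The main technical point, and the only real obstacle, is the first double sum: the exponential decay of $\varphi$ is naturally evaluated at $y$ while the summability of $u$ is weighted by $e^{-\delta d(x,x_0)}$, and it is precisely the finiteness of the jump size that lets one trade one weight for the other and close the estimate.
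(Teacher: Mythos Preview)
Your proposal is correct and follows essentially the same approach as the paper: both arguments reduce to showing absolute convergence of the mixed double sum $\sum_{x,y}\omega_{xy}|u(x)||\varphi(y)|$ by using the finite jump size to trade $e^{-\delta d(y,x_0)}$ for $e^{-\delta d(x,x_0)}$, together with \eqref{e7f}(ii), and then apply Fubini and the symmetry $\omega_{xy}=\omega_{yx}$. The only cosmetic difference is that you begin with a pointwise bound on $|\Delta\varphi|$ to see finiteness of the left-hand side, whereas the paper first bounds $\sum_x\mu(x)|\Delta u(x)||\varphi(x)|$; the underlying estimates are identical.
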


\begin{proof}
We first show that
\begin{equation*}
    \sum_{x\in V}\mu(x)|\Delta u(x)||\varphi(x)|
\end{equation*}
converges. Indeed,
\begin{equation*}
\begin{split}
    \sum_{x\in V}\mu(x)|\Delta u(x)||\varphi(x)|&\leq \sum_{x\in V}\sum_{y\sim x}\omega_{xy}(|u(x)|+|u(y)|)|\varphi(x)|\\
    &=\sum_{x\in V}\sum_{y\sim x}\omega_{xy}|u(x)||\varphi(x)|+\sum_{x\in V}\sum_{y\sim x}\omega_{xy}|u(y)||\varphi(x)|.
\end{split}
\end{equation*}
We observe that
\begin{equation*}
\begin{split}
    \sum_{x\in V}\sum_{y\sim x}\omega_{xy}|u(x)||\varphi(x)|&\leq C\sum_{x\in V} \mu(x)|u(x)||\varphi(x)|\\
    &\leq C\sum_{x\in V}\mu(x)|u(x)|e^{-\delta d(x,x_0)}
\end{split}
\end{equation*}
and that, since $d(y,x_0)\leq  j + d(x,x_0)$, we have
\begin{equation*}
\begin{split}
\sum_{x\in V}\sum_{y\sim x}\omega_{xy}|u(y)||\varphi(x)|&\leq C\sum_{x\in V}\sum_{y\sim x} \omega_{xy} |u(y)| e^{-\delta d(x,x_0)}\\
&\leq C e^{\delta j}\sum_{x\in V}\sum_{y\sim x} \omega_{xy}|u(y)|e^{-\delta d(y,x_0)}\\
&\leq C e^{\delta j}\sum_{y\in V}\mu(y)|u(y)| e^{-\delta d(y,x_0)}.
\end{split}
\end{equation*}
Thus, we have that
\begin{equation*}
    \sum_{x\in V}\mu(x)\varphi (x) \Delta u(x)
\end{equation*}
converges absolutely, and moreover
\begin{equation*}
     \sum_{x\in V}\sum_{y\sim x}\omega_{xy}(|u(x)|+|u(y)|)|\varphi(x)|<\infty.
\end{equation*}
Therefore, we obtain
\begin{equation*}
    \begin{aligned}
   & \sum_{x\in V} \Delta u(x) \varphi(x) \mu(x)=\sum_{x\in V}\sum_{y\sim x}\omega_{xy}(u(y)-u(x))\varphi(x)\\ &=\sum_{x\in V}\sum_{y\sim x}\omega_{xy} u(y)\varphi(x) - \sum_{x\in V}\sum_{y\sim x} \omega_{xy} u(x)\varphi(x)\\
        &=\sum_{y\in V}\sum_{x\in V}\omega_{xy}u(x) \varphi(y)-\sum_{x\in V}\sum_{y\in V}\omega_{xy}u(x)\varphi(x)\\
        &=\sum_{x\in V}\sum_{y\in V}\omega_{xy}u(x)(\varphi(y) - \varphi(x))=\sum_{x\in V}u(x) \Delta\varphi(x) \mu(x)\,.
    \end{aligned}
\end{equation*}
Which concludes the proof.
\end{proof}

\begin{remark}\label{r: cutoffnotcompact}
Consider the main problem \eqref{e: maineq}. Assume that, for some $\delta>0$, $u_0,u_1 \in X_{\delta}$, and that $u\in L^1_{\mathrm{loc}}([0,\infty),X_\delta)$ is a very weak solution. Let $$\varphi\colon V\times [0,\infty)\to \R$$ satisfy $\varphi\geq 0$ and moreover there exists $T>0$ such that
$$\varphi(x,t)\equiv 0 \quad \text{ for all } (x,t)\in V\times (T, \infty);$$
$$\varphi(x,\cdot)\in C^{2}([0,\infty)) \quad \text{ for each } x\in V,$$
and, for some $C>0$,
$$\varphi(x,t)\leq C e^{-\delta d(x,x_0)},\quad |\varphi_t(x,t)|\leq C e^{-\delta d(x,x_0)}, \quad  |\varphi_{tt}(x,t)|\leq C e^{-\delta d(x,x_0)}$$ for all $(x,t)\in V\times[0, \infty)$. Then we claim that $\varphi$ satisfies \eqref{e: veryweaksol}. In order to prove that, it is enough to apply a standard approximation argument. For the sake of completeness we include here a sketch of the proof. Let $\varphi\colon V\times [0,\infty)\to \R$ satisfy all the conditions above. Then we define
\[
\varphi_{k}(x,t)=\varphi(x,t) \bone_{B_k(x_0)}(x),
\]
for $k\in\N,$ $x\in V$, and $t\in [0,\infty)$. Since $u\in  L^1_{\mathrm{loc}}([0,\infty),X_\delta)$ is a very weak solution of \eqref{e: maineq}, $\varphi_k$ satisfies
\begin{equation*}
       \begin{aligned}
    \int_0^{\infty}\sum_{x\in B_k(x_0)} u(x,t)\varphi_{tt}(x,t)\mu(x) dt&- \int_0^{\infty}\sum_{x\in B_k(x_0)} \Delta u(x,t) \varphi (x,t)\mu(x) dt\\ & + \sum_{x\in B_k(x_0)} u_0(x) \varphi_t(x,0)\mu(x)- \sum_{x\in B_k(x_0)}u_1 \varphi(x,0)\mu(x)\\&\geq \int_{0}^{\infty}\sum_{x\in B_k(x_0)} v(x,t)|u(x,t)|^\sigma\varphi(x,t)\mu(x)dt,
    \end{aligned}
\end{equation*}
for every $k\in \N$. Taking the limit for $k\to\infty$, by the summability assumption on $u$ and the growth assumption of $\varphi$ and its derivatives, we get that $\varphi$ satisfies \eqref{e: veryweaksol}.
\end{remark}

\begin{proof}[Proof of Theorem \ref{teo2}]
    Let $\psi\colon [-j,\infty)\to (0,\infty)$ be such that
    \begin{itemize}
        \item $\psi \in C^2([-j,\infty))$;
        \item $\psi'\leq 0$;
        \item $\psi\equiv 1$ in $[-j,1]$;
        \item $\psi(r)=e^{-\delta r}$ for every $r\geq 2$.
    \end{itemize}
    Note that there exist two constants $C_1>0, C_2>0$ such that
    \begin{equation}\label{equation5.1}
        0<\psi(r)\leq C_1 e^{-\delta r} \quad \quad \psi(r)\geq C_2 e^{-\delta r},
    \end{equation}
    and
        \begin{equation}
        |\psi''(r)|\leq C_1 e^{-\delta r}\quad \quad |\psi'(r)|\leq C_1 e^{-\delta r},
    \end{equation}
    for every $r\in [-j,\infty)$. Let $\eta\colon [0,\infty)\to [0,\infty)$ be such that
    \begin{itemize}
        \item $\eta\in C^2([0,\infty))$;
        \item $\eta \equiv 1$ on $[0,1]$ and $\eta\equiv 0$ on $[2,\infty)$;
        \item $\eta'\leq 0$, $|\eta'|\leq C$, and $|\eta''|\leq C$ for some $C>0$;
        \item $0\leq \eta\leq 1$.
    \end{itemize}
    Finally, we define the test function $\phi_R\colon V\times [0,\infty)\to \R$ by
    \begin{equation}
        \phi_R(x,t)=\eta^s\left(\frac{t}{R^{\frac{1+\alpha}{2}}}\right)\psi \left(\frac{d(x,x_0)-j}{R}\right)
    \end{equation}
    for some $s>\frac{2\sigma}{\sigma-1}$ and $R\geq \max\{R_0, 2j\}$. We are going to prove the following upper bounds
\begin{enumerate}[(i)]
	\item\label{i: stima derivata in tempo senza segno} there exists $C\geq 0$ such that for every $x\in V$ and $t\in [0,\infty)$ it holds  \[|(\phi_R)_t(x,t)|\leq \frac{C}{R^{\frac{1+\alpha}{2}}}\eta^{s-1}\left(\frac{t}{R^{\frac{1+\alpha}{2}}}\right) e^{-\delta \frac{d(x,x_0)}{R}}\bone_{Q_R}(x,t),\]
    where $Q_R\coloneqq V\times [R^{\frac{1+\alpha}{2}},2R^{\frac{1+\alpha}{2}}]$;
    \item \label{i: stima derivata seconda in tempo senza segno} there exists $C\geq 0$ such that for every $x\in V$ and $t\in [0,\infty)$ it holds  \[|(\phi_R)_{tt}(x,t)|\leq \frac{C}{R^{1+\alpha}}\eta^{s-2}\left(\frac{t}{R^{\frac{1+\alpha}{2}}}\right) e^{-\delta \frac{d(x,x_0)}{R}}\bone_{Q_R}(x,t);\]
    \item\label{i: stima laplaciano moduli}  there exists $C\geq 0$ such that for every $x\in V$ and $t\in [0,\infty)$ it holds  \[|\Delta\phi_R(x,t)|\leq \frac{C}{R^{1+\alpha}}\eta^s\left(\frac{t}{R^{\frac{1+\alpha}{2}}}\right) e^{-\delta\frac{d(x,x_0)}{R}}\bone_{V\setminus B_R(x_0)}(x).\]
\end{enumerate}

    Observe that \eqref{i: stima derivata in tempo senza segno} and \eqref{i: stima derivata seconda in tempo senza segno} follow easily from the definition of $\phi_R$ and the properties of $\eta$ and $\psi$. Let us focus on \eqref{i: stima laplaciano moduli}. For every $x\in V$ and $t\in [0,\infty)$, we have
    \begin{equation*}
        \begin{split}
            \Delta \phi_R(x,t)&=\frac{1}{\mu(x)}\sum_{y\sim x} \omega_{xy} (\phi_R(y,t)-\phi_R(x,t))\\
            &=\eta^s\left(\frac{t}{R^{\frac{1+\alpha}{2}}}\right)\frac{1}{\mu(x)}\sum_{y\sim x} \omega_{xy} \left(\psi\left(\frac{d(y,x_0)-j}{R}\right)-\psi\left(\frac{d(x,x_0)-j}{R}\right)\right)\\
            &= \eta^s\left(\frac{t}{R^{\frac{1+\alpha}{2}}}\right)\frac{1}{\mu(x)}\sum_{y\sim x} \omega_{xy} \psi'\left(\frac{d(x,x_0)-j}{R}\right)\left(\frac{d(y,x_0)-d(x,x_0)}{R}\right)\\
            &\quad +\frac12\eta^s\left(\frac{t}{R^{\frac{1+\alpha}{2}}}\right)\frac{1}{\mu(x)}\sum_{y\sim x} \omega_{xy} \psi''\left(\xi\right)\left(\frac{d(y,x_0)-d(x,x_0)}{R}\right)^2,
        \end{split}
    \end{equation*}
    where $\xi$ lies between $\frac{d(y,x_0)-j}{R}$ and $\frac{d(x,x_0)-j}{R}$. Observing that $\psi'\left(\frac{d(x,x_0)-j}{R}\right)=0$ for any  $x\in B_{R+j}(x_0)$, and that $\xi \leq 1$ whenever $x\in B_R(x_0)$, we get $\Delta\phi_R(x,t)=0$ for $x\in B_R(x_0)$ and $t\in [0,\infty)$. Now, let $x\in V\setminus B_R(x_0)$, for $R\geq \max\{R_0, 2j\}$, we get
    \begin{equation*}
        \xi \geq \min\left\{\frac{d(y,x_0)-j}{R},\frac{d(x,x_0)-j}{R}\right\}\geq \frac{d(x,x_0)-2j}{R}\geq \frac{d(x,x_0)}{R}-1\geq 0.
    \end{equation*}
    This implies $|\psi''(\xi)|\leq C_1 e^{-\delta\xi}\leq C_1 e^{\delta}e^{-\delta\frac{d(x,x_0)}{R}}$, whenever $x\in V\setminus B_{R}(x_0)$. It follows that
\begin{equation*}
    \begin{split}
        |\Delta\phi_R(x,t)|&\leq \frac{1}{R}\eta^s\left(\frac{t}{R^{\frac{1+\alpha}{2}}}\right)\left|\psi'\left(\frac{d(x,x_0)-j}{R}\right)\right||\Delta d(x,x_0)| \bone_{V\setminus B_{R}(x_0)}(x)\\
        &\quad + \frac{C_1 e^{\delta}}{2 R^2}\eta^s\left(\frac{t}{R^{\frac{1+\alpha}{2}}}\right) e^{-\delta\frac{d(x,x_0)}{R}}\left(\frac{1}{\mu(x)}\sum_{y\sim x}\omega_{xy}\right)j^2\bone_{V\setminus B_R(x_0)}(x)\\
        &\leq\left[\frac{C}{R^{1+\alpha}} \eta^s\left(\frac{t}{R^{\frac{1+\alpha}{2}}}\right) e^{-\delta\frac{d(x,x_0)-j}{R}} + \frac{C}{R^2}\eta^s\left(\frac{t}{R^{\frac{1+\alpha}{2}}}\right) e^{-\delta\frac{d(x,x_0)}{R}}\right]\bone_{V\setminus B_R(x_0)}(x)\\
        &\leq \frac{C}{R^{1+\alpha}}\eta^s\left(\frac{t}{R^{\frac{1+\alpha}{2}}}\right) e^{-\delta\frac{d(x,x_0)}{R}}\bone_{V\setminus B_R(x_0)}(x),
    \end{split}
\end{equation*}
    and claim \eqref{i: stima laplaciano moduli} is proved.\\

    Since $u\in L^1_{\mathrm{loc}}([0, \infty), X_{\delta})$ is a very weak solution of \eqref{e: maineq}, by Remark \ref{r: cutoffnotcompact} the test function $\phi_R$ satisfies \eqref{e: veryweaksol}, for each $R\geq \max\{R_0, 2j\}$. We are going to estimate each term of
\begin{equation}\label{e: ineq to estimate}
\begin{split}
\int_0^{\infty}\sum_{x\in V} \mu(x)v(x,t)|u(x,t)|^\sigma\phi_R(x,t)\, dt
&\leq - \int_0^{\infty}\sum_{x\in V}\Delta u(x,t)\phi_R(x,t)\mu(x)\, dt\\
&\,\,\,\,\,\,\,\,+\sum_{x\in V}u_0(x)(\phi_R)_t(x,0)\mu(x)-\sum_{x\in V}u_1(x)\phi_R(x,0)\mu(x)\\
&\,\,\,\,\,\,\,\, +\int_{0}^{\infty}\sum_{x\in V}u(x,t)(\phi_R)_{tt}(x,t)\mu(x) \, dt.
\end{split}
\end{equation}
By \eqref{i: stima derivata seconda in tempo senza segno} and Young's inequality, using \eqref{equation5.1},\eqref{equa1122} and \eqref{equa1133} we get
\begin{equation*}
    \begin{aligned}
        &\left| \int_{0}^{\infty}\sum_{x\in V}u(x,t)(\phi_R)_{tt}(x,t)\mu(x) \, dt\right|\leq \int_0^\infty \sum_{x\in V} |u(x,t)| |(\phi_R)_{tt}(x,t)|\mu(x)\, dt\\
        &\leq \frac{C}{R^{1+\alpha}} \int_{R^{\frac{1+\alpha}{2}}}^{2R^{\frac{1+\alpha}{2}}}\sum_{x\in V}|u(x,t)|\eta^{s-2}\left(\frac{t}{R^{\frac{1+\alpha}{2}}}\right) e^{-\delta \frac{d(x,x_0)}{R}}\mu(x) \, dt\\
        &\leq \varepsilon\int_{R^{\frac{1+\alpha}{2}}}^{2R^{\frac{1+\alpha}{2}}}\sum_{x\in V}|u(x,t)|^\sigma v(x,t) \eta^s\left(\frac{t}{R^{\frac{1+\alpha}{2}}}\right) e^{-\delta\frac{d(x,x_0)}{R}}\mu(x)\, dt\\
        &\quad + \frac{C_{\varepsilon}}{R^{\frac{\sigma(\alpha+1)}{\sigma-1}}} \int_{R^{\frac{1+\alpha}{2}}}^{2R^{\frac{1+\alpha}{2}}}\sum_{x\in V} v^{-\frac{1}{\sigma-1}}(x,t)\eta^{s-\frac{2\sigma}{\sigma-1}}\left(\frac{t}{R^{\frac{1+\alpha}{2}}}\right)e^{-\delta\frac{d(x,x_0)}{R}}\mu(x)\, dt\\
        &\leq \varepsilon C\int_{R^{\frac{1+\alpha}{2}}}^{2R^{\frac{1+\alpha}{2}}}\sum_{x\in V}|u(x,t)|^\sigma v(x,t) \eta^s\left(\frac{t}{R^{\frac{1+\alpha}{2}}}\right) \psi\left(\frac{d(x,x_0)-j}{R}\right)\mu(x)\, dt\\
        &\quad + \frac{C_\varepsilon}{R^{\frac{\sigma(\alpha+1)}{\sigma-1}}} \int_{R^{\frac{1+\alpha}{2}}}^{2R^{\frac{1+\alpha}{2}}}\sum_{x\in B_R(x_0)} v^{-\frac{1}{\sigma-1}}(x,t)\eta^{s-\frac{2\sigma}{\sigma-1}}\left(\frac{t}{R^{\frac{1+\alpha}{2}}}\right)e^{-\delta \frac{d(x,x_0)}{R}}\mu(x)\, dt\\
        &\quad + \frac{C_{\varepsilon}}{R^{\frac{\sigma(\alpha+1)}{\sigma-1}}} \int_{0}^{2R^{\frac{1+\alpha}{2}}}\sum_{x\in V\setminus B_R(x_0)} v^{-\frac{1}{\sigma-1}}(x,t)\eta^{s-\frac{2\sigma}{\sigma-1}}\left(\frac{t}{R^{\frac{1+\alpha}{2}}}\right)e^{-\delta \frac{d(x,x_0)}{R}}\mu(x)\, dt\\
        &\leq \frac14\int_{R^{\frac{1+\alpha}{2}}}^{2R^{\frac{1+\alpha}{2}}}\sum_{x\in V}|u(x,t)|^\sigma v(x,t) \phi_R(x,t)\mu(x)\, dt + C
    \end{aligned}
\end{equation*}
for $\varepsilon>0$ sufficiently small. Now, combining Proposition \ref{p: ibp} with \eqref{i: stima laplaciano moduli} and  Young's inequality, using \eqref{equation5.1} and \eqref{equa1133} we obtain
\begin{equation*}
    \begin{split}
        &\left|-\int_0^{\infty}\sum_{x\in V} \Delta u(x,t)\phi_R(x,t)\mu(x)\, dt\right|\\
        &\quad= \left|\int_0^\infty \sum_{x\in V} u(x,t)\Delta \phi_R(x,t)\mu(x)\, dt\right|\\
        &\quad\leq \int_0^{\infty}\sum_{x\in V} |u(x,t)||\Delta\phi_R|\mu(x)\, dt\\
        & \quad\leq \frac{C}{R^{1+\alpha}}\int_0^{\infty}\sum_{x\in V\setminus B_R(x_0)} |u(x,t)|\eta^s\left(\frac{t}{R^{\frac{1+\alpha}{2}}}\right) e^{-\delta\frac{d(x,x_0)}{R}}\mu(x)\, dt\\
        &\quad\leq \varepsilon\int_{0}^{\infty}\sum_{x\in V\setminus B_R(x_0)}|u(x,t)|^\sigma v(x,t) \eta^s\left(\frac{t}{R^{\frac{1+\alpha}{2}}}\right) e^{-\delta\frac{d(x,x_0)}{R}}\mu(x)\, dt\\
        &\qquad+ \frac{C_{\varepsilon}}{R^{\frac{\sigma(\alpha+1)}{\sigma-1}}} \int_{0}^{\infty}\sum_{x\in V\setminus B_R(x_0)} v^{-\frac{1}{\sigma-1}}(x,t)\eta^s\left(\frac{t}{R^{\frac{1+\alpha}{2}}}\right)e^{-\delta \frac{d(x,x_0)}{R}}\mu(x)\, dt
\end{split}
\end{equation*}
and hence
\begin{equation*}
    \begin{split}
        &\left|-\int_0^{\infty}\sum_{x\in V} \Delta u(x,t)\phi_R(x,t)\mu(x)\, dt\right|\\
        &\quad\leq \varepsilon C\int_{0}^{\infty}\sum_{x\in V\setminus B_R(x_0)}|u(x,t)|^\sigma v(x,t) \eta^s\left(\frac{t}{R^{\frac{1+\alpha}{2}}}\right) \psi\left(\frac{d(x,x_0)-j}{R}\right)\mu(x)\, dt\\
        &\qquad + \frac{C_{\varepsilon}}{R^{\frac{\sigma(\alpha+1)}{\sigma-1}}} \int_{0}^{2R^{\frac{1+\alpha}{2}}}\sum_{x\in V\setminus B_R(x_0)} v^{-\frac{1}{\sigma-1}}(x,t)e^{-\delta \frac{d(x,x_0)}{R}}\mu(x)\, dt\\
        &\quad\leq \frac14\int_{0}^{\infty}\sum_{x\in V\setminus B_R(x_0)}|u(x,t)|^\sigma v(x,t) \phi_R(x,t)\mu(x)\, dt + C,
    \end{split}
\end{equation*}
for $\varepsilon>0$ small enough. Observing that
\begin{equation*}
    (\phi_R)_t(x,t)=\frac{s}{R^{\frac{1+\alpha}{2}}}\eta^{s-1}\left(\frac{t}{R^{\frac{1+\alpha}{2}}}\right)\psi\left(\frac{d(x,x_0)-j}{R}\right)\eta'\left(\frac{t}{R^{\frac{1+\alpha}{2}}}\right)\bone_{Q_R}(x,t),
\end{equation*}
we have $(\phi_R)_t(x,0)=0$, which implies
\begin{equation*}
    \sum_{x\in V}u_0(x)(\phi_R)_t(x,0)\mu(x)=0.
\end{equation*}
Finally, we have
\begin{equation*}
    \begin{split}
        -\sum_{x\in V} u_1(x)\phi_R(x,0)\mu(x)&=-\sum_{x\in V} u_1(x)\mu(x)\psi \left(\frac{d(x,x_0)-j}{R}\right)\\
        &=-\sum_{x\in V} u_1^+(x)\mu(x)\psi \left(\frac{d(x,x_0)-j}{R}\right)\\
        &\quad + \sum_{x\in V} u_1^{-}(x)\mu(x)\psi\left(\frac{d(x,x_0) - j}{R}\right)\\
        &\leq -\sum_{x\in B_{R+j}(x_0)} u_1^+(x)\mu(x) + \sum_{x\in V} u_1^-(x)\mu(x).
    \end{split}
\end{equation*}
Then we get
\begin{equation*}
    \begin{split}
        \int_0^{R^{\frac{1+\alpha}{2}}}\sum_{x\in B_R} v(x,t) |u(x,t)|^{\sigma}\mu(x) \, dt&\leq \int_0^{\infty}\sum_{x\in V} v(x,t)|u(x,t)|^\sigma \phi_R(x,t)\mu(x) dt\\
        &\leq C - \sum_{x\in B_{R+j}(x_0)}u_1^+(x) \mu(x) + \sum_{x\in V} u_1^-(x) \mu(x),
    \end{split}
\end{equation*}
and, taking the $\liminf$ as $R\to \infty$, by \eqref{equa1111} we get
\begin{equation}\label{equuuu}
    \begin{split}
        \int_0^{\infty}\sum_{x\in V} v(x,t) |u(x,t)|^{\sigma}\mu(x) \, dt&\leq C.
    \end{split}
\end{equation}
By H\"older inequality, using (iii) and \eqref{equa1133} we get
\begin{equation*}
    \begin{split}
          &\left|-\int_0^{\infty}\sum_{x\in V} \Delta u(x,t)\phi_R(x,t)\mu(x)\, dt\right|\\
          &\quad\leq \int_0^{\infty}\sum_{x\in V} |u(x,t)||\Delta\phi_R|\mu(x)\, dt\\
        &\quad \leq \frac{C}{R^{1+\alpha}}\int_0^{\infty}\sum_{x\in V\setminus B_R(x_0)} |u(x,t)|\eta^s\left(\frac{t}{R^{\frac{1+\alpha}{2}}}\right) e^{-\delta\frac{d(x,x_0)}{R}}\mu(x)\, dt\\
        &\quad\leq \left(\int_{0}^{\infty}\sum_{x\in V\setminus B_R(x_0)}|u(x,t)|^{\sigma}v(x,t)\eta^s\left(\frac{t}{R^{\frac{1+\alpha}{2}}}\right) e ^{-\delta \frac{d(x,x_0)}{R}}\mu(x)\, dt\right)^\frac{1}{\sigma}\\
        &\qquad \cdot\left(\frac{C}{R^{\frac{(1+\alpha)\sigma}{\sigma-1}}}\int_{0}^{\infty}\sum_{x\in V\setminus B_R(x_0)}v^{-\frac{1}{\sigma-1}}(x,t)\eta^{s}\left(\frac{t}{R^{\frac{1+\alpha}{2}}}\right)e^{-\delta\frac{d(x,x_0)}{R}}\mu(x)\, dt\right)^\frac{\sigma-1}{\sigma}\\
        &\quad\leq C\left(\int_0^{\infty}\sum_{x\in V\setminus B_R(x_0)}|u(x,t)|^{\sigma}v(x,t)\mu(x)\,dt\right)^{\frac{1}{\sigma}},
\end{split}
\end{equation*}
and by (ii), \eqref{equa1122} and \eqref{equa1133}
\begin{equation*}
    \begin{split}
        & \left| \int_{0}^{\infty}\sum_{x\in V}u(x,t)(\phi_R)_{tt}(x,t)\mu(x) \, dt\right|\\
        &\quad\leq \int_0^\infty \sum_{x\in V} |u(x,t)| |(\phi_R)_{tt}(x,t)|\mu(x)\, dt\\
        &\quad\leq \frac{C}{R^{1+\alpha}} \int_{R^{\frac{1+\alpha}{2}}}^{2R^{\frac{1+\alpha}{2}}}\sum_{x\in V}|u(x,t)|\eta^{s-2}\left(\frac{t}{R^{\frac{1+\alpha}{2}}}\right) e^{-\delta \frac{d(x,x_0)}{R}}\mu(x) \, dt\\
        &\quad\leq \left(\int_{R^{\frac{1+\alpha}{2}}}^{2R^{\frac{1+\alpha}{2}}}\sum_{x\in V}|u(x,t)|^{\sigma}v(x,t)\eta^s\left(\frac{t}{R^{\frac{1+\alpha}{2}}}\right) e ^{-\delta \frac{d(x,x_0)}{R}}\mu(x)\, dt\right)^\frac{1}{\sigma}\\
        &\qquad \cdot\left(\frac{C}{R^{\frac{(1+\alpha)\sigma}{\sigma-1}}}\int_{R^{\frac{1+\alpha}{2}}}^{2R^{\frac{1+\alpha}{2}}}\sum_{x\in V}v^{-\frac{1}{\sigma-1}}(x,t)\eta^{s-\frac{2\sigma}{\sigma-1}}\left(\frac{t}{R^{\frac{1+\alpha}{2}}}\right)e^{-\delta\frac{d(x,x_0)}{R}}\mu(x)\, dt\right)^\frac{\sigma-1}{\sigma}\\
        &\quad\leq C\left(\int_{R^{\frac{1+\alpha}{2}}}^{2R^\frac{1+\alpha}{2}}\sum_{x\in V}|u(x,t)|^{\sigma}v(x,t)\mu(x)\, dt\right)^{\frac{1}{\sigma}}.
    \end{split}
\end{equation*}
Combining these two estimates with \eqref{e: ineq to estimate}, we get
\begin{equation*}
    \begin{split}
        \int_0^{R^{\frac{1+\alpha}{2}}}\sum_{x\in B_R(x_0)} |u(x,t)|^\sigma v(x,t) \mu(x)\, dt &\leq \int_0^\infty\sum_{x\in V} |u(x,t)|^\sigma v(x,t) \phi_R(x,t)\mu(x)\, dt\\
        &\leq C \left(\int_0^\infty \sum_{x\in V\setminus B_R(x_0)} |u(x,t)|^\sigma v(x,t)\mu(x)\,dt\right)^{\frac{1}{\sigma}}\\
        & \quad + C \left(\int_{R^\frac{1+\alpha}{2}}^{2R^{\frac{1+\alpha}{2}}} \sum_{x\in V} |u(x,t)|^\sigma v(x,t)\mu(x)\,dt\right)^{\frac{1}{\sigma}}\\
        &\quad -\sum_{x\in B_{R+j}(x_0)} u^+_1(x)\mu(x) +\sum_{x\in V} u_1^-(x) \mu(x)
    \end{split}
\end{equation*}
and taking the $\liminf$ as $R\to \infty$, by \eqref{equa1111} and \eqref{equuuu} we get
\begin{equation*}
\int_0^\infty \sum_{x\in V} |u(x,t)|^\sigma v(x,t)\mu(x) \, dt \leq 0
\end{equation*}
which implies $u\equiv 0$ on $V\times [0,\infty)$.
\end{proof}

\begin{proof}[Proof of Corollary \ref{c: nosegno}]
    We apply Theorem \ref{teo2}. In order to do that is enough to observe that it
    \begin{equation*}
        \begin{split}
\int_{R^{\frac{1+\alpha}{2}}}^{2R^\frac{1+\alpha}{2}}\sum_{x\in B_R(x_0)}v^{-\frac{1}{\sigma-1}}(x,t) e^{-\delta\frac{d(x,x_0)}{R}}\mu(x)\, dt&\leq \int_{R^{\frac{1+\alpha}{2}}}^{2R^{\frac{1+\alpha}{2}}}\sum_{x\in V} g^{-\frac{1}{\sigma-1}}(x)e^{-\delta\frac{d(x,x_0)}{R}}\mu(x)\, dt\\ &\leq CR^{\frac{(1+\alpha)\sigma}{\sigma-1}},
        \end{split}
    \end{equation*}
    and
    \begin{equation*}
        \begin{split}
\int_{0}^{2R^\frac{1+\alpha}{2}}\sum_{x\in V\setminus B_R(x_0)}v^{-\frac{1}{\sigma-1}}(x,t) e^{-\delta\frac{d(x,x_0)}{R}}\mu(x)\, dt&\leq \int_{0}^{2R^{\frac{1+\alpha}{2}}}\sum_{x\in V} g^{-\frac{1}{\sigma-1}}(x)e^{-\delta\frac{d(x,x_0)}{R}}\mu(x)\, dt\\&\leq CR^{\frac{(1+\alpha)\sigma}{\sigma-1}}.
        \end{split}
    \end{equation*}
  Then the assertion follows immediately from Theorem \ref{teo2}.
\end{proof}

\begin{proof}[Proof of Corollary \ref{c: nosegnoV=1}]
    We apply Corollary \ref{c: nosegno} with $g\equiv 1$. Indeed, we observe that, if $k_0>R_0$ is large enough, for every $R\geq k_0$
    \begin{equation*}
    \begin{split}
            \sum_{x\in V} g^{-\frac{1}{\sigma-1}}(x) e^{-\delta\frac{d(x,x_0)}{R}}\mu(x)&= \sum_{x\in V} e^{-\delta\frac{d(x,x_0)}{R}}\mu(x)\\
            &\leq\sum_{k=k_0}^{\infty}\sum_{B_{k+1}(x_0)\setminus B_k(x_0)} e^{-\delta\frac{d(x,x_0)}{R}}\mu(x) + \sum_{x\in B_{k_0}(x_0)}e^{-\delta\frac{d(x,x_0)}{R}}\mu(x)\\
            &\leq C\sum_{k=k_0}^{\infty}e^{-\delta\frac{k}{R}}\operatorname{Vol}\big(B_{k+1}(x_0)\setminus B_k(x_0)\big) + \mu(B_{k_0}(x_0))\\
            &\leq C\sum_{k=k_0}^{\infty}e^{-\delta\frac{k}{R}}k^{\frac{\alpha+1}{2}\frac{\sigma+1}{\sigma-1}-1} + C\\
            &\leq C\int_{R_0}^{\infty} e^{-\delta \frac{r}{R}}r^{\frac{\alpha+1}{2}\frac{\sigma+1}{\sigma-1}-1}\, dr +C\leq CR^{\frac{(\alpha+1)(\sigma+1)}{2(\sigma-1)}}.
    \end{split}
    \end{equation*}
    The assertion follows as a consequence of Corollary \ref{c: nosegno}.
\end{proof}

\section{Examples and comments}\label{ex}

\begin{example}\label{ex: Zn}
    Let $(\Z^N, \omega, \mu)$ be the $N$-dimensional integer lattice graph endowed with the euclidean distance $d(x,y)=|x-y|$ for every $x,y\in\mathbb{Z}^N$ (see e.g. \cite[Example 5.1]{MPS1} for more information on this class of graphs). Here $\omega_{xy}=1$ for every pair of neighboring points $x,y$ and $\mu(x)=2N$ for every $x\in\mathbb{Z}^N$. We have $\Vol(B_R)=C_NR^N + O(R^{\theta})$ as $R$ tends to $+\infty$,
\begin{itemize}
    \item for every $\theta>N-2$ if $N\geq 4$ (see \cite{Fri82});
    \item for every $\theta> \frac{29}{22}$ if $N=3$ (see \cite{ChaIw95});
    \item for every $\theta>\frac{46}{73}$ if $N=2$ (see \cite{Hux93}).
    \end{itemize}
    Moreover $\Vol(B_R)=2(2[R]+1)$ if $N=1$. Therefore, it holds that $\operatorname{Vol}(B_R\setminus B_{R-1})\leq C R^{N-1}$ and $\operatorname{Vol}(B_R)\leq C R^{N}$. We also note that
    \begin{equation*}
        \Delta d^2(x,x_0)=\frac{1}{2N}\sum_{y\sim x} (d^2(y,x_0)-d^2(x,x_0))=1,
    \end{equation*}
    hence we have $|\Delta d^2(x,x_0)|=1$. Therefore, by Remark \ref{r: carattmodulolaplaciano}, it holds that
    \begin{equation*}
        |\Delta d(x,x_0)|\leq \frac{C}{d(x,x_0)}
    \end{equation*}
    for every $x\in V$ such that $d(x,x_0)>0$, and thus conditions \eqref{e7f} and \eqref{e7fn} hold with $\alpha=1$. Now let $u\in L^1_{loc}([0,\infty), X_\delta)$ be a very weak solution of \eqref{e: maineq} with $v\equiv1$, for some $\delta>0$. Suppose that $u_0,u_1\in X_\delta$ and
    \begin{equation*}
        \sum_{x\in V}u_1(x)\geq 0.
    \end{equation*}
    We observe that if
    \begin{equation}\label{4998}
    N=1, \, \sigma>1 \qquad \textrm{or}\qquad N\geq 2,\, 1<\sigma\leq \frac{N+1}{N-1}
    \end{equation}
    then it holds that $\operatorname{Vol}(B_R\setminus B_{R-1})\leq CR^{N-1}\leq C R^{\frac{\sigma+1}{\sigma-1}-1}$.
    Thus $u\equiv 0$ by Corollary \ref{c: nosegnoV=1}. On the other hand, suppose $u$ is a nonnegative very weak solution of \eqref{e: maineq} with $v\equiv1$ and assume \eqref{4998} holds. Then $\operatorname{Vol}(B_R)\leq CR^{N}\leq C R^{\frac{\sigma+1}{\sigma-1}}$. If
$$\liminf_{R\to\infty}\left\{\sum_{x\in B_R}u^+_1(x) - \sum_{x\in B_{2R}}u_1^-(x)\right\}\geq 0$$
    then $u\equiv 0$ by Corollary \ref{c: segno2}.

    This example should compared with the results in $\R^N$ (see for example \cite{Kato80}, \cite{MP01}, \cite{Sha85}) and Riemmanian manifolds \cite{MPS20}.
\end{example}

\begin{example}
    Let $\mathbb{T}_N$ be a homogeneous tree of degree $N>1$ and let $x_0$ denote the root of $\mathbb{T}_N$. We consider $\mathbb{T}_N$ endowed with the natural graph distance $d$ and $\mu(x)=1$ for every $x\in\mathbb{T}_N$. We recall that $d(x,y)$ is equal to the cardinality of the set of edges of the unique minimal path between $x$ and $y$ (we refer e.g. to \cite{MPS1} for more information on trees). Then conditions \eqref{e7f} and \eqref{e7fn} hold with $\alpha=0$. Let $v\colon \mathbb{T}_N\times[0,\infty)\to \R$ be such that
    \begin{equation*}
        v(x,t)\geq g(x)= C d(x,x_0)^{\frac{\sigma-3}{2}}N^{(\sigma-1)d(x,x_0)},
    \end{equation*}
    for every $x\in \mathbb{T}_N$ such that $d(x,x_0)\geq R_0 > 1$, with $\sigma>1$.
    We observe that, since the tree $\mathbb{T}_N$ is homogeneous we have, for each $k  \in \N$
    \begin{equation*}
        \mu(\{x\in \mathbb{T}_N\colon d(x,x_0)=k\})=|\{x\in \mathbb{T}_N\colon d(x,x_0)=k\}|=N^k.
    \end{equation*}
    From this we get, for every $R\geq R_0$ and $\delta>0$,
    \begin{equation*}
    \begin{split}
        \sum_{x\in V} g^{-\frac{1}{\sigma-1}}(x)e^{-\delta\frac{d(x,x_0)}{R}}\mu(x)&=
        C\sum_{x\in V} d(x,x_0)^{\frac{3-\sigma}{2(\sigma-1)}}N^{-d(x,x_0)}e^{-\delta\frac{d(x,x_0)}{R}}\\
        &=C\sum_{k=0}^{\infty}\sum_{\{x\in \mathbb{T}_N\colon d(x,x_0)=k\}}k^{\frac{3-\sigma}{2(\sigma-1)}}N^{-k}e^{-\delta\frac{k}{R}}\\
        &=C\sum_{k=0}^{\infty}k^{\frac{3-\sigma}{2(\sigma-1)}}e^{-\delta\frac{k}{R}}\leq C\int_0^{\infty} r^{\frac{3-\sigma}{2(\sigma-1)}}e^{-\delta\frac{r}{R}} \, dr= CR^{\frac{\sigma+1}{2(\sigma-1)}}.
        \end{split}
    \end{equation*}
    Therefore, if $\delta>0$, $\sum_{x\in V}u_1(x)\geq 0$, $u_0,u_1\in X_\delta$ and $u\in L^1_{loc}([0,\infty),X_\delta)$ is a very weak solution of \eqref{e: maineq}, then by Corollary \ref{c: nosegno} we have  $u\equiv 0$. Similarly, we have
    \begin{equation*}
    \begin{split}
    \sum_{x\in B_R(x_0)} g^{-\frac{1}{\sigma-1}}(x)\mu(x)&\leq C\sum_{k=0}^{[R]+1}\sum_{\{x\in \mathbb{T}_N\colon d(x,x_0)=k\}} k^{\frac{3-\sigma}{2(\sigma-1)}} N^{-k}\\
    &=C\sum_{k=0}^{[R]+1} k^{\frac{3-\sigma}{2(\sigma-1)}}\leq CR^\frac{\sigma+1}{2(\sigma-1)}.
    \end{split}
    \end{equation*}
    Thus if $u$ is a nonnegative very weak solution of \eqref{e: maineq} and
    $$\liminf_{R\to\infty}\left\{\sum_{x\in B_R(x_0)}u^+_1(x) - \sum_{x\in B_{2R}(x_0)}u_1^-(x)\right\}\geq 0$$
    by Corollary \ref{c: segno} we have  $u\equiv 0$.
\end{example}

\begin{example}
    Let $N\in \N$ and consider $V=\Z^N\times W$, where $(W,\omega_W,\mu_W)$ is any finite weighted graph. For $(x_1,w_1),(x_2,w_2)\in \Z^N\times W$ we set $(x_1,w_1)\sim(x_2,w_2)$ if and only if $x_1=x_2$ and $w_1\sim w_2$ or $x_1\sim x_2$ and $w_1= w_2$. Therefore we can define $\omega_V\colon V\times V\to [0,\infty)$ and $\mu_V\colon V\to [0,\infty)$ by
    \begin{equation}
        \omega_V((x_1,w_1),(x_2,w_2))=\begin{cases}
1 \,\,\,\, &\text{if} \,\,x_1\sim x_2,\, w_1=w_2\\
\omega_W(w_1,w_2)\,\,\,\, &\text{if}\,\, x_1= x_2,\, w_1\sim w_2,
        \end{cases}
    \end{equation}
and
    \begin{equation}
\mu_V(x,w)=\max\{2N, \mu_W(w)\},
    \end{equation}
    in particular there are two positive constant $c_1,c_2$ such that $c_1\leq \mu_V(x,w)\leq c_2$ for every $(x,w)\in V$. Observe that for a function $u\colon V\to\R$, we have
    \begin{equation*}
    \begin{split}
        \Delta_V u(x,w)&=\frac{1}{\mu_V(x,w)}\sum_{y\sim x}(u(y,w)-u(x,w))+\frac{1}{\mu_V(x,w)}\sum_{z\sim w} \omega_W(w,z)(u(x,z)-u(x,w))\\
        &=\frac{2N}{\mu_V(x,w)}(\Delta_{\Z^N}u(\cdot,w))(x) + \frac{\mu_W(w)}{\mu_V(x,w)}(\Delta_W u(x,\cdot))(w)
        \end{split}
    \end{equation*}
    We endow the weighted graph $(V,\omega_V,\mu_V)$ with the distance
    \begin{equation*}
        d_V((x,w),(y,z))=\sqrt{|x-y|^2+d_W^2(w,z)},
    \end{equation*}
    where $|\cdot|$ is the euclidean distance restricted to $\Z^N$, see also Example \ref{ex: Zn}, and where $d_W$ is the graph distance on $W$. Let $w_0\in W$ be fixed. Since $W$ is finite, the distance $d_W(\cdot,w_0)$ satisfies \eqref{e7fn}-(v') for each $\alpha\in [0,1]$. The same holds for $d_V$, indeed
    \begin{equation*}
        \begin{split}
            &|\Delta d^2_V((x,w),(0,w_0))|\\
            &\quad=\left|\frac{1}{\mu_V(x,w)}\sum_{z\sim w}\omega_W(z,w) [d^2_W(z,w_0)-d^2_W(w,w_0)] +\frac{1}{\mu_V(x,w)} \sum_{y\sim x}[|y|^2-|x|^2]\right|\\
            &\quad\leq |\Delta d_W^2(w,w_0)|+|\Delta|x|^2|\leq C.
        \end{split}
    \end{equation*}
    Then, by Remark \ref{r: carattmodulolaplaciano}, $d_V$ satisfies \eqref{e7f} and \eqref{e7fn} with $\alpha=1$.
    We denote by $D$ the diameter of $W$. For $R>1$ sufficiently large and $(x,w)\in B_R^V((0,w_0))\setminus B_{R-1}^{V}(0,w_0)$, where $B_R^V((0,w_0))$ denotes the ball of center $(0,w_0)$ and radius $R$ in $(V,d_V)$. We have
    \begin{equation*}
        R-1\leq d_V((x,w),(0,w_0))\leq R
    \end{equation*}
    which implies
    \begin{equation*}
        (R-1)^2\leq |x|^2 + d_W^2(w,w_0)\leq R^2,
    \end{equation*}
    from which we get
    \begin{equation*}
        (R-2)^2\leq (R-1)^2 -D^2\leq |x|^2\leq R^2.
    \end{equation*}
    This implies that $x\in B_R^{\Z^N}(0)\setminus B_{R-2}^{\Z^N}(0)$. Hence we get $B_R^V((0,w_0))\setminus B_{R-1}^{V}((0,w_0))\subset (B_R^{\Z^N}(0)\setminus B_{R-2}^{\Z^N}(0))\times W$. Then, we can deduce the following estimate on the volume growth of the balls
    \begin{equation*}
    \begin{split}
        \mu_V(B_R^V((0,w_0))\setminus B_{R-1}^{V}((0,w_0)))&\leq \mu_V( (B_R^{\Z^N}(0)\setminus B_{R-2}^{\Z^N}(0))\times W)\\
        &\leq C| (B_R^{\Z^N}(0)\setminus B_{R-2}^{\Z^N}(0))\times W|\\
        &\leq C| (B_R^{\Z^N}(0)\setminus B_{R-2}^{\Z^N}(0))|\leq C R^{N-1},
        \end{split}
    \end{equation*}
    where we have used the same estimate as in Example \ref{ex: Zn}.
      Therefore, if $v\equiv 1$, $\delta>0$, $\sum_{(x,w)\in V}u_1(x,w)\mu_V(x,w)\geq 0$, $u_0,u_1\in X_\delta$ and $u\in L^1_{loc}([0,\infty),X_\delta)$ is a very weak solution of \eqref{e: maineq}, if $1<\sigma\leq \frac{N+1}{N-1}$ and $N\geq 2$ or if $\sigma>1$ and $N=1$ by Corollary \ref{c: nosegnoV=1} we have  $u\equiv 0$.

Moreover for $R>1$ sufficiently large we have
\begin{equation*}
            \mu_V(B_R^V((0,w_0)))\leq \mu_V( B_R^{\Z^N}(0)\times W)\leq C|B_R^{\Z^N}(0)|\leq CR^N,
\end{equation*}
Hence if $u$ is a nonnegative very weak solution of \eqref{e: maineq} with $v\equiv 1$ and
 $$\liminf_{R\to\infty}\left\{\sum_{(x,w)\in B^V_R((0,w_0))}u^+_1(x,w) - \sum_{(x,w)\in B^V_{2R}((0,w_0))}u_1^-(x,w)\right\}\geq 0,$$
if $1<\sigma\leq \frac{N+1}{N-1}$ and $N\geq 2$ or if $\sigma>1$ and $N=1$ by Corollary \ref{c: segno2} we have  $u\equiv 0$.
\end{example}

\section{Finite graphs}\label{finite}

In this section we investigate problem \eqref{e: maineq}, when the weighted graph $(V,\omega,\mu)$ is finite. Adapting the proof of \cite[Theorem 5.2]{MPS2} to our context we get the following result. For sake of completeness we include a sketch of the proof.

\begin{theorem}\label{teo3}
    Let $(V,\omega,\mu)$ be a weighted \emph{finite} graph. Let $v\colon V\times [0,\infty)\to \R$ be a positive function, $\sigma>1$. Suppose that for every $R\geq R_0>0$
	\begin{equation}\label{e: stimavolumi2}
		\int_R^{2R}\sum_{x\in V} v^{-\frac{1}{\sigma-1}}(x,t)\mu(x)\,dt\leq CR^{\frac{2\sigma}{\sigma-1}}.
	\end{equation}
Let $u\colon V\times [0,\infty)\to \R$ be a very weak solution of \eqref{e: maineq} and suppose that
\begin{equation*}
    \sum_{x\in V}u_1(x)\mu(x)\geq 0,
\end{equation*}
then $u\equiv 0$.
\end{theorem}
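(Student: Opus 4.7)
The plan is to mimic the strategy of Theorem \ref{teo1}, but to exploit the finiteness of $V$ by choosing a test function that depends only on time. Since $V$ is finite, a purely temporal cut-off is admissible in \eqref{e: veryweaksol}, and its graph Laplacian vanishes identically. This removes the Laplacian term from the very weak formulation, so the argument reduces to a one-dimensional test-function argument in the time variable.

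More concretely, I would pick $\varphi\in C^2([0,\infty))$ with $\varphi\equiv 1$ on $[0,1]$, $\varphi\equiv 0$ on $[2,\infty)$, $\varphi'\leq 0$, and set $\phi_R(t)=\varphi^s(t/R)$ for some $s>\frac{2\sigma}{\sigma-1}$ and $R\geq R_0$. Since $\phi_R(0)=1$, $\phi_R'(0)=0$, and $\Delta\phi_R\equiv 0$, plugging $\phi_R$ into \eqref{e: veryweaksol} yields
\begin{equation*}
\int_0^\infty\sum_{x\in V}v(x,t)|u(x,t)|^\sigma\phi_R(t)\mu(x)\,dt\leq \int_0^\infty\sum_{x\in V}u(x,t)\phi_R''(t)\mu(x)\,dt - \sum_{x\in V}u_1(x)\mu(x).
\end{equation*}
Standard computations give $|\phi_R''(t)|\leq CR^{-2}\varphi^{s-2}(t/R)\bone_{[R,2R]}(t)$, and the support assumptions on $\varphi$ ensure $\phi_R$ is compactly supported in $t$, so all sums/integrals are finite.

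Then I would proceed in two steps, exactly as in \cite[Theorem 5.2]{MPS2}. Step one (finiteness of the nonlinear quantity): apply Young's inequality
\begin{equation*}
|u|\,|\phi_R''|\leq \varepsilon v|u|^\sigma\phi_R + C_\varepsilon v^{-\frac{1}{\sigma-1}}|\phi_R''|^{\frac{\sigma}{\sigma-1}}\phi_R^{-\frac{1}{\sigma-1}},
\end{equation*}
and use $\phi_R^{-\frac{1}{\sigma-1}}|\phi_R''|^{\frac{\sigma}{\sigma-1}}\leq CR^{-\frac{2\sigma}{\sigma-1}}\varphi^{s-\frac{2\sigma}{\sigma-1}}(t/R)\bone_{[R,2R]}(t)$. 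Assumption \eqref{e: stimavolumi2} then exactly absorbs the $R^{-\frac{2\sigma}{\sigma-1}}$ factor, yielding a bound uniform in $R$. Absorbing the $\varepsilon$-term on the left and letting $R\to\infty$ (by monotone convergence, since $\phi_R\nearrow 1$), I obtain
\begin{equation*}
\int_0^\infty\sum_{x\in V}v(x,t)|u(x,t)|^\sigma\mu(x)\,dt<\infty.
\end{equation*}

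Step two (sharpening to zero): I would replace Young's inequality with Hölder's:
\begin{equation*}
\int_R^{2R}\sum_{x\in V}|u|\,|\phi_R''|\mu\,dt\leq\left(\int_R^{2R}\sum_{x\in V}v|u|^\sigma\mu\,dt\right)^{\!\frac{1}{\sigma}}\left(\int_R^{2R}\sum_{x\in V}v^{-\frac{1}{\sigma-1}}|\phi_R''|^{\frac{\sigma}{\sigma-1}}\mu\,dt\right)^{\!\frac{\sigma-1}{\sigma}},
\end{equation*}
whose second factor is bounded by a constant thanks to \eqref{e: stimavolumi2}. Combining with the main inequality gives
\begin{equation*}
\int_0^\infty\sum_{x\in V}v|u|^\sigma\phi_R\mu\,dt\leq C\left(\int_R^{2R}\sum_{x\in V}v|u|^\sigma\mu\,dt\right)^{\!\frac{1}{\sigma}} - \sum_{x\in V}u_1(x)\mu(x).
\end{equation*}
By step one, the tail $\int_R^{2R}\sum v|u|^\sigma\mu\,dt\to 0$ as $R\to\infty$. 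Taking $\liminf$ and using the hypothesis $\sum_x u_1(x)\mu(x)\geq 0$, I conclude $\int_0^\infty\sum_x v|u|^\sigma\mu\,dt\leq 0$, and since $v>0$ this forces $u\equiv 0$.

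There is essentially no serious obstacle beyond carefully reproducing the derivative bounds for $\phi_R$ and matching the powers of $R$: the only structural change compared with the parabolic analogue in \cite[Theorem 5.2]{MPS2} is that the second derivative in time produces an $R^{-2}$ factor rather than $R^{-1}$, which is precisely compensated by the fact that the exponent on $R$ in \eqref{e: stimavolumi2} is $\frac{2\sigma}{\sigma-1}$ instead of $\frac{\sigma}{\sigma-1}$.
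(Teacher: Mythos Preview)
Your proposal is correct and follows essentially the same approach as the paper: the same purely temporal cut-off $\varphi^s(t/R)$, the same observation that finiteness of $V$ kills the Laplacian term, and the same two-step Young/H\"older argument driven by \eqref{e: stimavolumi2}. The only cosmetic difference is that the paper absorbs the $-\sum_{x}u_1(x)\mu(x)$ term immediately after writing the very-weak inequality (using the sign hypothesis), whereas you carry it to the end; both are fine.
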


\begin{proof}
    Let $\varphi\colon [0,\infty)\to \R$ be a function in $C^2([0,\infty))$ such that $\varphi\equiv 1$ in $[0,1]$, $\varphi\equiv 0$ in $[2,\infty)$, $\varphi'(t)\leq 0$ for each $t\geq 0$. Since $V$ is a finite set and $u$ is a very weak solution of \eqref{e: maineq}, we have that $\phi^s_R(t)=\varphi^s(\frac{t}{R})$, with $s\geq \frac{2\sigma}{\sigma-1}$ satisfies
 \begin{equation}\label{e: ineq to estimate finite}
\begin{split}
\int_0^{\infty}\sum_{x\in V} \mu(x)v(x,t)|u(x,t)|^\sigma\phi_R^s(t)\, dt
&\leq - \int_0^{\infty}\sum_{x\in V}\Delta u(x,t)\phi_R^s(t)\mu(x)\, dt\\
&\,\,\,\,\,\,\,\,+\sum_{x\in V}u_0(x)(\phi_R^s)_t(x,0)\mu(x)-\sum_{x\in V}u_1(x)\phi_R^s(0)\mu(x)\\
&\,\,\,\,\,\,\,\, +\int_{0}^{\infty}\sum_{x\in V}u(x,t)(\phi_R^s)_{tt}(t)\mu(x) \, dt.
\end{split}
\end{equation}
We observe that
\begin{enumerate}[(i)]
    \item $|(\phi_R^s)_{tt}(t)|\leq \frac{C}{R^2}\phi_R^{s-2}(t)\bone_{[R,2R]}(t)$;
    \item $\phi_R^s(0)=1$, and $(\phi_R^s)_t(0)=0$;
    \item \[
    \int_0^\infty \sum_{x\in V} \Delta u(x,t)\phi_R^s(t)\mu(x)\, dt= \int_0^\infty \sum_{x\in V} u(x,t)\Delta\phi_R^s(t)\mu(x)=0.
    \]
\end{enumerate}
From \eqref{e: ineq to estimate finite} it follows that
\begin{equation}\label{e52}
    \int_0^{\infty}\sum_{x\in V} \mu(x)v(x,t)|u(x,t)|^\sigma\phi_R^s(t)\, dt \leq \frac{C}{R^2}\int_{R}^{2R} \sum_{x\in V}|u(x,t)| \phi^{s-2}_R(t)\mu(x)\, dt.
\end{equation}

Applying Young's inequality we get

\begin{equation*}
    \begin{split}
       \int_0^{\infty}\sum_{x\in V} \mu(x)v(x,t)|u(x,t)|^\sigma\phi_R^s(t)\, dt &\leq \frac{C}{R^\frac{2\sigma}{\sigma-1}}\int_R^{2R}\sum_{x\in V} v^{-\frac{1}{\sigma-1}}(x,t)\phi_R^{s-\frac{2\sigma}{\sigma-1}}\mu(x)\, dt \\
       &\quad +\frac{1}{\sigma}\int_R^{2R}\sum_{x\in V}|u(x,t)|^{\sigma}v(x,t)\phi_R^s(t)\mu(x)\,dt,
    \end{split}
\end{equation*}
which implies
\begin{equation*}
    \int_0^R\sum_{x\in V}v(x,t)|u(x,t)|^{\sigma}\mu(x)\,dt\leq\frac{C}{R^{\frac{2\sigma}{\sigma-1}}}\int_R^{2R}\sum_{x\in V}v^{-\frac{1}{\sigma-1}}(x,t)\mu(x)\,dt\leq C.
\end{equation*}
Then, passing to the limit as $R$ tends to $+\infty$, we obtain
\begin{equation*}
    \int_0^\infty \sum_{x\in V} v(x,t)|u(x,t)|^\sigma\mu(x)\,dt\leq C.
\end{equation*}
Finally, by a standard application of H\"older inequality starting from \eqref{e52}, we get
\begin{equation*}
    \int_0^{\infty}\sum_{x\in V} v(x,t) |u(x,t)|^\sigma\mu(x) \, dt=0,
\end{equation*}
which shows that $u\equiv 0$.
\end{proof}

As a consequence we get the following corollary. Since the proof is immediate, we will omit it.

\begin{corollary}\label{coroll4}
    Let $(V,\omega,\mu)$ be a weighted \emph{finite} graph. Let $v\colon V\to \R$ be a positive function, $\sigma>1$.
Let $u\colon V\times [0,\infty)\to \R$ be a  very weak solution of \eqref{e: maineq} and suppose that
\begin{equation*}
    \sum_{x\in V}u_1(x)\mu(x)\geq 0,
\end{equation*}
then $u\equiv 0$.
\end{corollary}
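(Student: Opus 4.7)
The plan is to derive Corollary \ref{coroll4} as an immediate consequence of Theorem \ref{teo3}. Indeed, the only difference between the two statements is that in the corollary the potential $v$ is assumed to depend only on $x\in V$, not on time, while Theorem \ref{teo3} allows $v\colon V\times[0,\infty)\to\mathbb{R}$ but requires the weighted volume bound \eqref{e: stimavolumi2}. Hence the entire task reduces to verifying that the time-independent setting forces \eqref{e: stimavolumi2} to hold automatically.

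First I would observe that, since $V$ is finite and $v>0$ on $V$, the quantity
\[
K:=\sum_{x\in V} v^{-\frac{1}{\sigma-1}}(x)\,\mu(x)
\]
is a finite positive constant depending only on $(V,\omega,\mu)$, $v$ and $\sigma$. Viewing $v$ as independent of $t$, for every $R>0$ we then have
\[
\int_R^{2R}\sum_{x\in V} v^{-\frac{1}{\sigma-1}}(x,t)\,\mu(x)\,dt \;=\; R\cdot K.
\]
Since $\sigma>1$ implies $\frac{2\sigma}{\sigma-1}>2>1$, choosing $R_0:=1$ gives $R\leq R^{\frac{2\sigma}{\sigma-1}}$ for all $R\geq R_0$, and therefore
\[
\int_R^{2R}\sum_{x\in V} v^{-\frac{1}{\sigma-1}}(x,t)\,\mu(x)\,dt \;\leq\; K\,R^{\frac{2\sigma}{\sigma-1}}
\quad\text{for every }R\geq R_0,
\]
which is precisely \eqref{e: stimavolumi2}.

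With the volume condition in hand, I would then simply invoke Theorem \ref{teo3}: under the remaining hypothesis $\sum_{x\in V}u_1(x)\mu(x)\geq 0$, the theorem yields $u\equiv 0$, which is the conclusion of the corollary. No step here is genuinely difficult; the only thing to notice is the trivial but essential fact that $\frac{2\sigma}{\sigma-1}>1$ whenever $\sigma>1$, so that a constant-in-$R$ quantity is absorbed by the right-hand side of \eqref{e: stimavolumi2}. This matches the authors' remark that the proof is immediate and can safely be omitted.
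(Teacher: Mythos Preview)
Your proposal is correct and matches the paper's approach: the authors explicitly state that the corollary is an immediate consequence of Theorem~\ref{teo3} and omit the proof, and your argument spells out precisely the one-line verification of \eqref{e: stimavolumi2} that makes this immediate.
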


%-------------------------------------------------------%
%                                                       %
% 						BIBLIOGRAPHY    				%
%                                                       %
%-------------------------------------------------------%

\end{document}